\documentclass{amsart}
\usepackage{amsmath, amssymb, amscd, amsthm, amsfonts}
\usepackage{tikz-cd}
\usepackage{graphicx}
\usepackage{hyperref}
\usepackage{tikz}
\usepackage{subcaption}
\usepackage{caption}
\usepackage{geometry}

\title{Knot Floer homology of boundary Dehn Twists}
\author{Rithwik Susheel Vidyarthi}
\date{}

\newtheorem{theorem}{Theorem}[section]
\newtheorem{prop}[theorem]{Proposition}
\newtheorem{lemma}[theorem]{Lemma}

\newtheorem{question}[theorem]{Question}
\newtheorem{problem}[theorem]{Problem}

\theoremstyle{definition}

\newtheorem*{acknowledgements}{\textbf{Acknowledgements}}

\begin{document}
\address{Department of Mathematics, Michigan State University, East Lansing, MI 48824, USA}
\email{vidyart2@msu.edu}

\begin{abstract}
We prove that the knot Floer complex, along with some restrictions on the flip map, determines the dual knot to $\pm 1$ surgery on the Borromean knot. In particular, this implies that Heegaard Floer homology detects boundary Dehn twists. 
\end{abstract}

\maketitle

\section{Introduction}
Ozsv\'ath and Szab\'o introduced powerful three-manifold invariants called Heegaard Floer homology \cite{OS1} that have been very influential in the field of low-dimensional topology. These gave rise to knot Floer homology, an invariant for null-homologous knots in 3-manifolds \cite{OSKnot, Ras}. Knot Floer homology can determine if a knot is fibered \cite{Ghi, NiFiber}. It also determines the genus of the knot \cite{OSgenus}. In recent years, knot Floer homology has been found to encode data about the monodromy of a fibered knot. For instance, the next-to-top grading of knot Floer homology contains information about the fixed points of the monodromy \cite{ghiggini2022knotfloerhomologyfibred, Ni, NiFixedPoints2}. In this paper we study the extent to which knot Floer homology determines the monodromy when it is a boundary Dehn twist.

The Borromean knot is described as follows. Consider the three-component Borromean link and perform 0 surgery on two of the link components. The remaining component yields a knot in $\#^2S^2\times S^1$. This is referred to as the genus one Borromean knot $\mathcal{B}_1$. The genus $g$ Borromean knot $\mathcal{B}_g$, is obtained via a connected sum of $g$ copies of $\mathcal{B}_1$. $\mathcal{B}_g$ is a fibered knot whose monodromy is the identity map on the genus $g$ surface with one boundary component $\Sigma_{g,1}$. Hedden and Watson proved that knot Floer homology detects the Borromean knot \cite{MattWatson}. Their proof follows from the observation that the knot Floer homology of $\mathcal{B}_g$ contains a single generator in the top and bottom Alexander grading with no differentials, a fact which implies that the open book decomposition $(\Sigma, \phi)$ and $(\Sigma, \phi^{-1})$, specified by any knot with this property, give rise to tight contact structures. Hence by \cite{HKM1}, $\phi$ and $\phi^{-1}$ are both right-veering which implies $\phi=\mathrm{Id}$.
\vspace{-5pt}
\begin{figure}[h]
    \centering
    \includegraphics[width=0.8\textwidth]{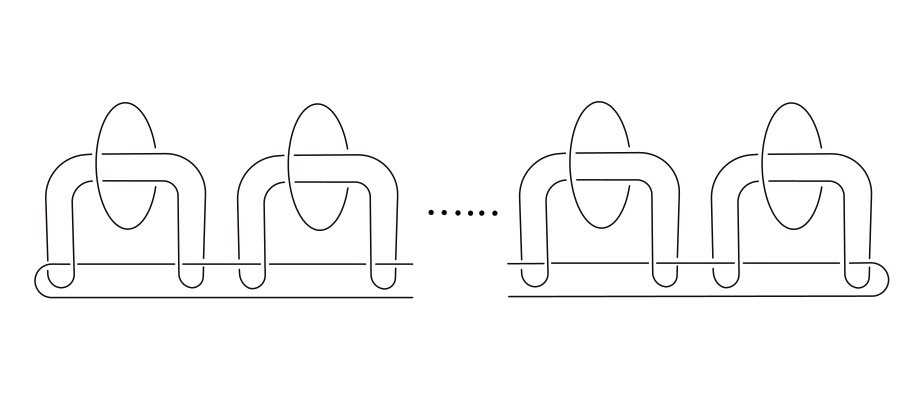}
    \caption{Genus $g$ Borromean knot $\mathcal{B}_g$}
\end{figure}

Given an element $\phi\in MCG(\Sigma_{g,1},\partial)$, we can construct the mapping torus $M_\phi$. On the torus boundary of $M_\phi$, there is a natural choice of meridian given by the circle direction and a choice of longitude given by $\partial(\Sigma_{g,1})$. This allows one to glue a solid torus by mapping the meridian of the solid torus to the longitude of the boundary torus of $M_\phi$. Gluing the solid torus in this way gives rise to a closed three-manifold $\widehat{M}_\phi$. The core of the glued-in solid torus is a knot $K_\phi$ in $\widehat{M}_\phi$. In fact, $K_\phi$ is a fibered knot whose monodromy is $\phi$. Consequently, each element of $MCG(\Sigma_{g,1},\partial)$ gives rise to pair $(\widehat{M}_\phi, K_\phi)$, where $\widehat{M}_\phi$ is a three-manifold and $K_\phi\subset \widehat{M}_\phi$ is a knot. We can therefore think of knot Floer homology as a map from $MCG(\Sigma_{g,1},\partial)$ to filtered chain complexes up to filtered chain homotopy by computing the knot Floer homology of $K_\phi$. The theorem of Hedden and Watson shows that $\mathrm{Id}\in MCG(\Sigma_{g,1},\partial)$ is the unique element mapping to $\mathrm{CFK}^\infty(\mathcal{B}_g)$, hence knot Floer homology detects $\mathrm{Id}$. There are other results in the same vein; for instance the preimage of the complexes of the unknot, trefoil and the figure eight knot are also unique, as knot Floer homology detects these knots \cite{OSgenus, Ghi}.

We perform $-1$ (respectively $+1$) surgery on the Borromean knot and consider the dual knot, that is, the core of the glued-in solid torus. The dual knot, denoted by $\mathcal{\widetilde{B}}^R_g$ (respectively $\mathcal{\widetilde{B}}^L_g$) is also fibered with the monodromy of $\mathcal{\widetilde{B}}^R_g$ (respectively $\mathcal{\widetilde{B}}^L_g$) given by a right-handed (respectively left-handed) boundary Dehn twist on $\Sigma_{g,1}$. In this paper, we calculate the full knot Floer complex of $\mathcal{\widetilde{B}}^R_g$ and $\mathcal{\widetilde{B}}^L_g$ using the dual knot formula \cite{MattLevine}. To describe the result, let $P(g,k,l)$ be a set with $2 g \choose l$ elements enumerated as $\{p^k_{l,1},\cdots,p^k_{l,{2g\choose l}}\}$
and let $Q(g,k,l)$ be a set with $2 g \choose l$ elements enumerated as $\{q^k_{l,1},\cdots,q^k_{l,{2g\choose l}}\}$. We will let $\mathbb{F}$ denote the field with two elements.

\begin{theorem}\label{MT}
At each Alexander grading $-g\leq k\leq g$, $\mathrm{CFK}^{\infty}(\mathcal{\widetilde{B}}^R_g)$ is generated over $\mathbb{F}[U,U^{-1}]$ by the basis \\ $\bigcup_{l=0}^{g-|k|-1}P(g,k,l)\bigsqcup\bigcup_{l=0}^{g-|k|}Q(g,k,l)$. 
\begin{enumerate}
    \item The Maslov grading of generators in $P(g,k,l)$ is $-g+k-k^2+1+l$.
    \item The Maslov grading of generators in $Q(g,k,l)$ is $g+k-k^2-l$.
	\item The differential of a generator $p^k_{l,j} \in P(g,k,l)$ is given as follows. 
			\[
			\partial^\infty(p^k_{l,j}) =U^{g-k-l}(q^{k+1}_{l,j} + U^{2k-1}q^{k-1}_{l,j}).
			\]
	\item The differential is zero on the subspace generated by $Q(g,k,l)$.
\end{enumerate}
\end{theorem}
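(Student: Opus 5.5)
The plan is to apply the Hedden--Levine dual knot formula \cite{MattLevine} to $\mathcal{B}_g\subset \#^{2g}S^2\times S^1$ with surgery coefficient $-1$. We first record $\mathrm{CFK}^\infty(\mathcal{B}_g)$. Since $\mathcal{B}_1$ has $\widehat{HFK}\cong \mathbb{F}_{(0)}\oplus\mathbb{F}^2_{(-1)}\oplus \mathbb{F}_{(-2)}$ in Alexander gradings $1,0,-1$, with vanishing differential, the Künneth formula gives the following for $\mathcal{B}_g$. The complex $\mathrm{CFK}^\infty(\mathcal{B}_g)$ is $\Lambda^*H_1(\Sigma_{g})\otimes \mathbb{F}[U,U^{-1}]$, with trivial differential. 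The exterior power $\Lambda^{l}$ of the $2g$-dimensional space, of rank $\binom{2g}{l}$, sits in Alexander grading $g-l$ and Maslov grading $-l$. This complex is reduced with $\partial^\infty=0$, which is exactly what makes the dual knot formula tractable. We also need the flip map for $\mathcal{B}_g$. Since it is a connected sum of copies of $\mathcal{B}_1$, we take it to be the tensor product of the flip maps on the factors. On $\mathcal{B}_1$ we expect it to be the identity on the middle generators, up to the homotopy relevant for the mapping cone. This is the input that feeds the ``twisted'' part of the formula.

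Next we run the dual knot formula. For a framing $n=-1$ on a genus $g$ knot, the formula describes $\mathrm{CFK}^\infty$ of the dual knot $\widetilde{K}$ as a filtered mapping cone $\bigoplus_s A_s \xrightarrow{v_s+h_s} \bigoplus_s B_s$, truncated to the range $|s|\le g$ roughly. The copies of $A_s$ and $B_s$ carry an explicit second (Alexander) filtration determined by $s$ and the framing. Because $\partial^\infty=0$ on $\mathcal{B}_g$, each $A_s$ and $B_s$ is a free module on $\Lambda^*$ with zero internal differential. The cone differential is therefore just $v_s$ plus $h_s$. Here $v_s$ is multiplication by a power of $U$ on generators of Alexander grading above $s$, and $h_s$ is the same composed with the flip. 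We then cancel isomorphism components (the identity-type pieces of $v_s,h_s$) by filtered Gaussian elimination, organized by exterior degree $l$.

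After cancellation, the surviving generators should be as follows. The $B$-type survivors become the $Q(g,k,l)$; they are cycles. The $A$-type survivors become the $P(g,k,l)$, each mapping by $v$ to one neighbour and by $h$ to the other. This yields $\partial p^k_{l,j}=U^{a}q^{k+1}_{l,j}+U^{b}q^{k-1}_{l,j}$. We determine the exponents $a=g-k-l$ and $b=a+2k-1$ by bookkeeping the filtration levels. The ranges $l\le g-|k|-1$ for $P$ and $l\le g-|k|$ for $Q$ come from the truncation of the cone. Maslov gradings come from the grading shifts in the cone, namely $d$-invariant corrections of the surgered manifold and the quadratic term $-k^2+k$ typical of $\mp1$ surgery. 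We check them by consistency: $\partial$ lowers Maslov grading by one, so $M(p)-1=M(q^{k+1})-2a$. Indeed $(-g+k-k^2+l)=(g+k+1-(k+1)^2-l)-2(g-k-l)$, which holds. As a sanity check, the homology should be $HF^\infty$ of $\#^{2g}S^2\times S^1$ surgered, i.e. rank $2^{2g}$ after localization.

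The main obstacle is the flip map. The dual knot formula genuinely depends on the flip map, and $h_s$ needs it explicitly to see that the $U^{2k-1}q^{k-1}$ term pairs index $j$ with the same $j$. We would first try to argue that the flip map on $\mathcal{B}_1$ is, up to filtered homotopy, the identity on $\Lambda^*$. The argument would go via the symmetry of $\mathcal{B}_1$, or via the fact that any grading-preserving involution of this complex with zero differential is forced up to change of basis. Connected sum then gives the tensor product. A secondary difficulty is making the Gaussian eliminations compatible with both filtrations simultaneously. For this we would do the cancellations in order of decreasing $|s|$, so that no new zigzag terms arise, since $\partial^\infty=0$ initially.
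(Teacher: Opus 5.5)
Your outline follows the same route as the paper. You apply the Hedden--Levine formula to $\mathcal{B}_g$ at framing $-1$, using that the internal differential is zero. The $Q(g,k,l)$ are the survivors in $B_{-k}$, and the $P(g,k,l)$ are sums $x+y\in A_{-k}\oplus A_{-k+1}$ with $v_{-k}(x)=h_{-k+1}(y)$.

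For the flip map, your second option (rigidity) is the right one, and it is all that is needed. Lemma \ref{basis} is exactly this relabeling, with Ozsv\'ath--Szab\'o's $\omega\mapsto *I\omega$ as input. Because $C\{i,j\}$ sits in Maslov grading $i+j$, a grading-preserving quasi-isomorphism $C\{i=0\}\to C\{j=0\}$ between zero-differential complexes is an isomorphism $C\{0,t\}\to C\{t,0\}$ for each $t$. No involutivity and no connected-sum formula are required. Labels can then be propagated along the cycle-free chains $Q(g,k-1,l)\leftarrow P(g,k,l)\to Q(g,k+1,l)\leftarrow P(g,k+2,l)\to\cdots$. There are, however, two genuine gaps.

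\textbf{Absolute gradings.} Items (1) and (2) are not established, and the input you wrote would contradict them. The absolute grading on $\mathrm{CFK}^\infty(\mathcal{B}_g)$ is fixed by $\widehat{HF}(\#^{2g}S^2\times S^1)$, which is symmetric about $0$. So the generators at $(0,j)$ have Maslov grading $j$, not $j-g$; for $\mathcal{B}_1$ this is $\mathbb{F}_{(1)}\oplus\mathbb{F}^2_{(0)}\oplus\mathbb{F}_{(-1)}$. The Hedden--Levine shifts are $gr=\widetilde{gr}-s-s^2+1$ on $A_s$ and $gr=\widetilde{gr}-s-s^2$ on $B_s$. With your input they would place $P(g,k,l)$ in grading $-2g+k-k^2+1+l$ and $Q(g,k,l)$ in grading $k-k^2-l$. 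Your consistency check (that $\partial^\infty$ lowers grading by one) is invariant under a global shift, so it cannot detect this. The gradings must be computed from the shift formulas with the correct input. For example, a generator of $B_{-k}$ at $(0,g-l)$ gets $(g-l)+k-k^2$.

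\textbf{The mapping-cone computation.} The central step, the homology of each $MC(-k)$, is asserted (``should be'') rather than computed. Also, the ranges of $l$ do not come from truncating the cone. For $k\ge 0$, the $\mathcal{I}=0$, $\mathcal{J}=k$ part of $MC(-k)$ consists of three pieces:
the vertical arm $\{i=0,\ j\le -k-1\}$ of $A_{-k}$ (the corner $(0,-k)$ has $\mathcal{J}=k+1$);
the horizontal arm $\{i\le 0,\ j=-k+1\}$ of $A_{-k+1}$;
and $B_{-k}$.

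Here $v_{-k}$ maps the vertical arm isomorphically onto $B_{-k}\{i=0,\ j\le -k-1\}$. The map $h_{-k+1}$ sends the point $(i,-k+1)$ to $(0,i+k-1)$, so it maps the horizontal arm isomorphically onto $B_{-k}\{i=0,\ j\le k-1\}$. Hence $B_{-k}\{j\ge k\}$ survives, giving $Q(g,k,l)$ for $l\le g-k$. The doubly covered range $j\le -k-1$ yields the cycles $x+y$, giving $P(g,k,l)$ for $l\le g-k-1$. For $k<0$ the two ranges exchange roles.

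The exponents then follow from positions. For $x$ at $(0,-g+l)$, $h_{-k}(x)$ is $U^{g-k-l}$ times a generator of $Q(g,k+1,l)$. For $y$ at $(-g-k+1+l,\,-k+1)$, $v_{-k+1}(y)$ is $U^{g+k-1-l}$ times a generator of $Q(g,k-1,l)$. With this computation and the corrected input grading, your outline becomes the paper's proof.
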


Note that computations involving the surgery formula require knowledge of an auxiliary piece of data known as the flip map $\psi_{\mathrm{flip}}$. For knots in $S^3$, and L-spaces in general, $\psi_{\mathrm{flip}}$ can be determined (see \cite[Lemma 2.18]{MattLevine}). For knots in other three-manifolds, $\psi_{\mathrm{flip}}$ can be highly non-trivial. We prove the following.

\begin{prop}
\label{flipmap}
Let $\Theta_{k} \in C\{0,k\} \subseteq \mathrm{CFK}^{\infty}(\mathcal{\widetilde{B}}^R_g)$ denote the unique generator of $Q(g,k,0)$ for Alexander grading $-g\leq k\leq g-1$. Then $\psi_{\mathrm{flip}}: C\{j=0\}\to C\{i=0\}$ satisfies $\psi_{\mathrm{flip}}(U^{k}\Theta_k)=\Theta_{-k-1}$.
\end{prop}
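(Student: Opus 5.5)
The plan is to pin down $\psi_{\mathrm{flip}}$ on homology from formal properties alone, and then promote the answer to the chain level. The formal properties I will use are: it is a filtered chain homotopy equivalence $C\{j=0\}\to C\{i=0\}$ induced by moving the basepoint $z$ to $w$ along the knot and then applying the diffeomorphism; it covers conjugation of $\mathrm{Spin}^c$ structures; it is homogeneous with respect to the Maslov grading once $C\{j=0\}$ is graded by $\mathrm{gr}_z=\mathrm{gr}_w-2A$; and it is equivariant for the action of $\Lambda^*(H_1(Y)/\mathrm{Tors})$, possibly up to lower-order terms. Here $Y$ is $-1$ surgery on $\mathcal{B}_g$, so $H_1(Y)\cong\mathbb{Z}^{2g}$. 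The model I will work with is the explicit complex of Theorem \ref{MT}.

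\textbf{Step 1: read off the two homologies.} From Theorem \ref{MT}, $C\{i=0\}$ is spanned by the $p^k_{l,j}$ and $q^k_{l,j}$ with $i=0$, and the differential there keeps only the terms of $\partial^\infty$ that preserve $i$. Doing the same for $C\{j=0\}$, I will compute the homology of each complex, split by Alexander grading $k$ and by $l$. The key point is that $Q(g,k,0)$ is a single generator $\Theta_k$, the extreme ($l=0$) level of the exterior-algebra-shaped pattern indexed by $l$. So I expect $[\Theta_k]$ to be the unique nonzero class in its grading (and $\mathrm{Spin}^c$ summand) that lies in the image of the full $\Lambda^{2g}H_1$ action. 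In particular $[\Theta_k]$ generates its summand as a $\Lambda^*H_1$-module. I will also check directly that $U^k\Theta_k$ is a cycle in $C\{j=0\}$ that is not a boundary.

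\textbf{Step 2: match summands and gradings.} I will use the dual knot formula of \cite{MattLevine} to identify which $\mathrm{Spin}^c$ structure, or relative Alexander grading, each filtration level $k$ corresponds to. I then verify that conjugation exchanges the summand containing $U^k\Theta_k$ (in $C\{j=0\}$) with the one containing $\Theta_{-k-1}$ (in $C\{i=0\}$). The expected mechanism is the usual symmetry $k\leftrightarrow -k-1$ coming from the $\pm1$ shift in the surgery framing. As a consistency check, I will confirm that the $\mathrm{gr}_z$-grading of $U^k\Theta_k$ agrees with the $\mathrm{gr}_w$-grading of $\Theta_{-k-1}$ after the grading shift recorded in the surgery formula. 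Equivariance under $\Lambda^*H_1$ then forces $\psi_{\mathrm{flip}*}$ to send the module generator $[U^k\Theta_k]$ to the module generator $[\Theta_{-k-1}]$. Over $\mathbb{F}$ there is no sign ambiguity, because the target space in that grading is one-dimensional.

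\textbf{Step 3: lift to the chain level.} The complexes are free and, in the relevant gradings, reduced. So a chain homotopy equivalence is determined up to homotopy by its action on homology. I can therefore choose the representative of $\psi_{\mathrm{flip}}$ so that $\psi_{\mathrm{flip}}(U^k\Theta_k)=\Theta_{-k-1}$ exactly, since no nonzero boundaries live in that grading.

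\textbf{Main obstacle.} I expect the hardest part to be Step 2. That step must justify that $\psi_{\mathrm{flip}}$ respects the $H_1$-action and must correctly track the $\mathrm{Spin}^c$ labels through the dual knot formula. If equivariance proves awkward, my fallback is to build $\psi_{\mathrm{flip}}$ directly. For $\mathcal{B}_g$ the flip map is known, because it is a connected sum of copies of $\mathcal{B}_1$, whose flip map comes from an explicit symmetric Heegaard diagram. I would then push that map through the surgery exact triangle, or mapping cone, to get the flip map for the dual knot.
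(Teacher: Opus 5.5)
\textbf{There is a genuine gap.} Nothing in your Steps 1--3 actually pins down $\psi_{\mathrm{flip}}$ on these classes, and the ingredient that does so in the paper is missing.

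\textbf{Grading, quasi-isomorphism and $\mathrm{Spin}^c$ cannot decide the answer.} The model complex of Theorem \ref{MT} has an $i\leftrightarrow j$ symmetry
$\sigma(U^m x)=U^{m-k}\bar x$.
Here $x$ is any $p^k_{l,r}$ or $q^k_{l,r}$, and $\bar x$ is the generator with the same $l,r$ and with $k$ replaced by $-k$. A direct check against Theorem \ref{MT} shows that $\sigma$ is a grading-preserving chain isomorphism carrying $C\{j=0\}$ onto $C\{i=0\}$. It satisfies $\sigma(U^k\Theta_k)=\Theta_{-k}$, not the value you are trying to prove. The paper's introduction likewise records two algebraically possible flip maps.

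The specific mechanisms in your Step 2 also fail:
\begin{enumerate}
\item The $\mathrm{Spin}^c$ part is empty. For a null-homologous knot the flip map preserves $\mathfrak{s}$ rather than conjugating it. Moreover, $Y$ is the Euler number $\pm1$ circle bundle over $\Sigma_g$, and $H_2(Y)$ is generated by tori. So adjunction puts all of $\widehat{HF}(Y)$ in the single torsion $\mathrm{Spin}^c$ structure.
\item The target grading is not one-dimensional. In $C\{i=0\}$ the only differentials are $p^{m+1}_{g+m,r}\mapsto q^{m}_{g+m,r}$ for $-g\le m\le -1$. Hence, for $-g\le k\le g-1$, the classes $[\Theta_{k+1}]$ and $[\Theta_{-k}]$ are independent and nonzero. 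Both lie in grading $g-k-k^2=\mathrm{gr}(U^k\Theta_k)$.
\item The $\Lambda^*H_1$-equivariance that would have to break this tie is only asserted. You never compute the $H_1$-action induced on the dual-knot complex through the mapping cone.
\item Your description of $[\Theta_k]$ as lying in the image of $\Lambda^{2g}H_1$ looks backwards. $Q(g,k,0)$ comes from the top exterior power $\Lambda^{2g}H^1$ in $B_{-k}$.
\item The fallback does not help. The dual knot formula outputs $\mathrm{CFK}^\infty$ of the dual knot, not its flip map.
\end{enumerate}

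\textbf{The missing idea is the geometric input the paper uses.} Since $\mathcal{\widetilde{B}}^R_g$ is dual to $-1$ surgery on $\mathcal{B}_g$, $+1$ surgery on it returns $\mathcal{B}_g$ as the dual knot. By Hedden--Watson, the unique top-Alexander-grading generator of $\mathcal{B}_g$ must survive with zero differential. The paper then argues as follows:
\begin{enumerate}
\item In the $+1$ mapping cone that generator is $\Theta_g\in A_g$, with differential $v_g(\Theta_g)$.
\item This can only be cancelled by $h_{g-1}$ of an element of $A_{g-1}$, and Maslov grading leaves only $\Theta_{g-1}$. This forces $\psi_{\mathrm{flip}}(U^{g-1}\Theta_{g-1})=\Theta_g$.
\item Then $v_{g-1}(\Theta_{g-1})$ must be cancelled in turn, forcing $\psi_{\mathrm{flip}}(U^{g-2}\Theta_{g-2})=\Theta_{g-1}$, and so on down the chain.
\end{enumerate}
This is exactly what excludes $\sigma$: built from $\sigma$, $h_{g-1}$ lands in $C\{i=0,\ j\le 1-g\}$ and can never hit $\Theta_g$.

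\textbf{A note on the target index.} The chain above produces $\psi_{\mathrm{flip}}(U^k\Theta_k)=\Theta_{k+1}$, which has the correct grading $g-k-k^2$. The grading check you plan against $\Theta_{-k-1}$ would fail: $\mathrm{gr}(\Theta_{-k-1})=g-3k-k^2-2$, which agrees with $\mathrm{gr}(U^k\Theta_k)$ only when $k=-1$. So the index $-k-1$ in the statement appears to be a misprint for $k+1$. The ``$k\leftrightarrow -k-1$'' mechanism you propose has no independent basis.
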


With the above constraints on the flip-map, we conclude that the knot Floer complex determines boundary Dehn twists. More precisely we have the following.

\begin{theorem}
\label{detect}
Let $K\subset Y$ be an arbitrary knot in a three-manifold $Y$. Suppose that $\mathrm{CFK}^{\infty}(K,Y)$ is filtered chain homotopic to $\mathrm{CFK}^{\infty}(\mathcal{\widetilde{B}}^R_g)$. Furthermore, assume that the flip map on  $\mathrm{CFK}^{\infty}(K,Y)$ satisfies Proposition \ref{flipmap}. Then $K$ is isotopic to $\mathcal{\widetilde{B}}^R_g$.
\end{theorem}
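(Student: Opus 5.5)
The strategy is to reverse the dual knot construction. Given $K\subset Y$ with $\mathrm{CFK}^\infty(K,Y)\simeq \mathrm{CFK}^\infty(\mathcal{\widetilde{B}}^R_g)$, we first extract topological information from the complex. The top Alexander grading $k=g$ of Theorem \ref{MT} contains only $Q(g,g,0)$, which has a single generator. So $\widehat{\mathrm{HFK}}(Y,K,g)\cong\mathbb{F}$, with no generator in any Alexander grading above $g$. By the genus detection and fiberedness detection results \cite{OSgenus, Ghi, NiFiber}, $K$ is then a fibered knot of genus $g$. Write $(\Sigma_{g,1},\phi)$ for its open book, so that $(Y,K)=(\widehat{M}_\phi,K_\phi)$. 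It suffices to show that $\phi$ is the right-handed boundary Dehn twist $\tau_\partial$.

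To prove this, perform the inverse surgery. The dual of $-1$ surgery on $\mathcal{B}_g$ is $\mathcal{\widetilde{B}}^R_g$, and surgery on $\mathcal{\widetilde{B}}^R_g$ along the fiber framing $+1$ (measured relative to the page) recovers $\mathcal{B}_g$. Let $K'\subset Y'$ be the dual knot of the corresponding surgery on $K$ along the page framing. Topologically, $K'$ is fibered with monodromy $\phi\circ\tau_\partial^{-1}$, since this surgery composes the monodromy with a boundary twist. The complex $\mathrm{CFK}^\infty(K',Y')$ is computed by the dual knot surgery formula of \cite{MattLevine}. Its input is the filtered complex $\mathrm{CFK}^\infty(K,Y)$ together with the flip map $\psi_{\mathrm{flip}}$, and both agree with those of $\mathcal{\widetilde{B}}^R_g$: the complex by hypothesis, and the flip map on the relevant generators by Proposition \ref{flipmap}. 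Hence the output is filtered homotopy equivalent to the output for $\mathcal{\widetilde{B}}^R_g$, namely $\mathrm{CFK}^\infty(\mathcal{B}_g)$. By Hedden--Watson \cite{MattWatson}, $K'$ is then isotopic to $\mathcal{B}_g$, so $\phi\circ\tau_\partial^{-1}=\mathrm{Id}$ and $\phi=\tau_\partial$. This identifies $(Y,K)$ with $(\widehat{M}_{\tau_\partial},K_{\tau_\partial})=\mathcal{\widetilde{B}}^R_g$.

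The main obstacle is the second step, for two reasons. First, the surgery formula uses the flip map on the whole of $C\{j=0\}$, not only on the classes $U^k\Theta_k$. I would argue that only its action on homology matters, that $H_*(C\{j=0\})$ is generated in the relevant towers by the classes $[U^k\Theta_k]$, and that the remaining components of the flip map are forced by grading considerations up to homotopy. For that I would use the Maslov grading formulas in Theorem \ref{MT} together with the fact that $\psi_{\mathrm{flip}}$ is a graded homotopy equivalence. Second, one must check that spin$^c$ structures and the framing convention match. The page framing used on $K$ must correspond to the same integer surgery that was used in computing the dual of $\mathcal{B}_g$. This should follow because the Alexander and Maslov grading shifts in the formula are determined by the complex. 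If transferring the flip map only up to homotopy turns out to be too weak, I would instead directly show that the top-grading part of the output complex has a single generator with vanishing differentials. That is all Hedden--Watson's argument actually uses, and it gives tightness of both $\phi\tau_\partial^{-1}$ and its inverse, hence $\phi\circ\tau_\partial^{-1}=\mathrm{Id}$ via \cite{HKM1}.
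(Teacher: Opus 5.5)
Your outline matches the paper's. You pass to the dual knot $K^\nu$ of $+1$ surgery on $K$, compute it with the Hedden--Levine formula, recognize $\mathcal{B}_g$ via Hedden--Watson, and undo the surgery. The gap is in the step you actually rely on: that the input data agree with those of $\mathcal{\widetilde{B}}^R_g$, so the output is $\mathrm{CFK}^\infty(\mathcal{B}_g)$. Proposition \ref{flipmap} fixes $\psi_{\mathrm{flip}}$ only on the classes $U^k\Theta_k$. The maps $h_s(x)=U^s\psi_{\mathrm{flip}}(x)$ in every cone $\mathrm{MC}(s)$ use the whole flip map, so the hypotheses do not let you identify the full output complex. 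Your proposed repair, that the remaining components of $\psi_{\mathrm{flip}}$ are forced by Maslov gradings up to homotopy, is unsupported, and gradings alone cannot do it. The flip map need only be a grading-preserving quasi-isomorphism, and each Maslov grading contains large blocks of generators; for instance, all $\binom{2g}{l}$ elements of $Q(g,k,l)$ share a bigrading. In the paper's own analysis, $h_{-g+1}(p^{-g+1}_{0,1})$ has $2g+2$ admissible targets. The paper does not determine the full flip map and explicitly leaves it open.

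Your fallback is exactly the paper's argument and does work, but it has to be carried out and stated correctly. Hedden--Watson's criterion concerns both extremal Alexander gradings $\pm g$ of the dual complex. Each must contain a single generator, with no differential leaving the top one and none hitting the bottom one. These correspond to non-vanishing contact invariants for the monodromy and for its inverse, so the top grading alone gives only one of the two.

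The two checks differ in nature:
\begin{itemize}
\item \emph{Bottom grading.} The lone generator $q^{-g}_{0,1}\in A_{-g+1}$ is hit by $p^{-g+1}_{0,1}$ inside $A_{-g+1}$. The paper shows $p^{-g+1}_{0,1}$ dies in $\mathrm{MC}(-g+1)$ for \emph{every} admissible flip map. Targets of higher Alexander grading are not in the image of $v_{-g+2}$. The targets $q^{-g+1}_{1,i}$ are boundaries in the vertical strip while $p^{-g+1}_{0,1}$ survives in the horizontal one, so a quasi-isomorphism cannot send it there.
\item \emph{Top grading.} Here $v_g(x_g)$ must be a boundary, which forces $h_{g-1}(q^{g-1}_{0,1})=v_g(x_g)$. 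That in turn forces $v_{g-1}(q^{g-1}_{0,1})$ to be a boundary, and so on down a zigzag through all Alexander gradings. Cancelling successively leaves $x_g$ with zero differential. This is precisely where the hypothesis on $\psi_{\mathrm{flip}}(U^k\Theta_k)$ is used.
\end{itemize}

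Finally, your preliminary detour through fiberedness of $K$ and the identity $\phi\circ\tau_\partial^{-1}=\mathrm{Id}$ is unnecessary. Once $K^\nu\cong\mathcal{B}_g$, $K$ is the dual of $-1$ surgery on $K^\nu$ and hence isotopic to $\mathcal{\widetilde{B}}^R_g$.
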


We also have analogous results for $\mathcal{\widetilde{B}}^L_g$. See Theorem \ref{MT2}, Proposition \ref{flipmap2} and Theorem \ref{detect2} for the precise statements.

Since the center of $MCG(\Sigma_{g,1},\partial)$ is generated by the identity map and boundary Dehn twists, we conclude that knot Floer homology has a unique preimage for the knots corresponding to the generators of the center of $MCG(\Sigma_{g,1},\partial)$. It is natural to ask if knot Floer homology detects the full center of $MCG(\Sigma_{g,1},\partial)$; that is, if knot Floer homology detects powers of boundary Dehn twists. These correspond to fibered knots given by the dual knots to $\pm \frac{1}{n}$ surgery on $\mathcal{B}_g$.
\begin{question}
Does knot Floer homology detect the center of $MCG(\Sigma_{g,1},\partial)$?
\end{question} 
In a different direction, we can study surfaces with more than one boundary component and possibly punctures. Let $\Sigma_{g,b,n}$ denote the genus $g$ surface with $b$ boundary components and $n$ punctures. Having multiple boundary components corresponds to a fibered link $L$ in a three-manifold $Y$. Additionally, if the surface has punctures, this data can be thought of as a transverse link (the orbit of the punctures) in a contact three-manifold (induced by the open book decomposition of the fibered link $L$). We can now study whether link/knot Floer homology detects elements in the mapping class group of $\Sigma_{g,b,n}$.

\begin{question}
What elements of $MCG(\Sigma_{g,n,b},\partial)$ are detected by link/knot Floer homology?
\end{question} 

A way to strengthen Theorem \ref{detect} is to remove the condition on $\psi_{\mathrm{flip}}$. It turns out that there are only two possible choices of $\psi_{\mathrm{flip}}$ on $\mathrm{CFK}^{\infty}(\mathcal{\widetilde{B}}^R_g)$. In order to remove the constraint on $\psi_{\mathrm{flip}}$ it suffices to show that only one of the choices can be realized geometrically.

Note that we have only calculated a part of the flip-map of $\mathcal{\widetilde{B}}^R_g$ and $\mathcal{\widetilde{B}}^L_g$. It remains to calculate the full-flip map. It would also be worthwhile to compute the flip-map of $\mathcal{\widetilde{B}}^R_g$ and $\mathcal{\widetilde{B}}^L_g$ over $\mathbb{Z}$ coefficients. A description of the flip map of $\mathcal{B}_g$ over $\mathbb{Z}$ coefficients given in \cite[Section 3.2]{MarkJabuka}.

\begin{problem}
Calculate $\psi_{\mathrm{flip}}$ of $\mathcal{\widetilde{B}}^R_g$ and $\mathcal{\widetilde{B}}^L_g$ over $\mathbb{Z}$ coefficients.
\end{problem}

\begin{acknowledgements}
I would like to thank my advisor, Matt Hedden, for his invaluable guidance and patience answering all my questions. This project was supported by the grant DMS-2104664 and RTG: Algebraic and Geometric Topology at Michigan State University DMS-2135960.

AI usage in this project has been minimal, restricted to editing concerns such as grammatical errors and spellings. 

\end{acknowledgements}

\section{Knot Floer Homology of $\mathcal{B}_g$}\label{section-knot floer}
Oszv\'ath and Szab\'o calculated the knot Floer complex of the Borromean knot and its flip map in \cite{OSKnot}. They prove that $\mathrm{CFK}^{\infty}(\mathcal{B}_g)=\mathbb{F}[U, U^{-1}] \otimes_\mathbb{Z} \Lambda^*H^1(\Sigma_g;\mathbb{Z})$. Following \cite{integer}, we denote the $\mathbb{Z}\oplus \mathbb{Z}$ filtration as follows.
$$C\{i,j\}=U^{-i}\otimes \Lambda^{g-i+j}H^1(\Sigma_g;\mathbb{Z}).$$
Below we draw the generating sets for the knot Floer complexes for the genus 1 and 2 Borromean knots over $\mathbb{F}[U,U^{-1}]$.\newline
\begin{minipage}{0.45\textwidth}
\label{CFKBorromean}
 \centering
   \begin{tikzpicture}
    \draw[thick, <->] (-3,0) -- (3,0) node[anchor=north west] {$i$}; 
    \draw[thick, <->] (0,-3) -- (0,3) node[anchor=south east] {$j$}; 

    \draw[step=1cm, gray, very thin] (-3,-3) grid (3,3);

    \fill[black] (0.3,0) circle (3pt);
    \fill[black] (-0.3,0) circle (3pt);
    \fill[black] (0,1) circle (3pt);
    \fill[black] (0,-1) circle (3pt);
  \end{tikzpicture} 
 \\[5pt]
 \captionof{figure}{$\mathrm{CFK}^{\infty}(\mathcal{B}_1)$}
\end{minipage}
\hspace{1cm}
\begin{minipage}{0.45\textwidth}
\centering
\begin{tikzpicture}
    \draw[thick, <->] (-3,0) -- (3,0) node[anchor=north west] {$i$}; 
    \draw[thick, <->] (0,-3) -- (0,3) node[anchor=south east] {$j$}; 

    \draw[step=1cm, gray, very thin] (-3,-3) grid (3,3);

    \fill[black] (0.1,0.1) circle (2pt);
    \fill[black] (-0.1,0.1) circle (2pt);
    \fill[black] (0.1,-0.1) circle (2pt);
    \fill[black] (0.3,0) circle (2pt);
    \fill[black] (-0.3,-0) circle (2pt);
    \fill[black] (-0.1,-0.1) circle (2pt);

     \fill[black] (0.1,1.1) circle (2pt);
     \fill[black] (-0.1,1.1) circle (2pt);
      \fill[black] (0.1,0.9)coordinate(top) circle (2pt);
       \fill[black] (-0.1,0.9) circle (2pt);
         
      \fill[black] (0.1,-1.1)  circle (2pt);
      \fill[black] (-0.1,-1.1) circle (2pt);
      \fill[black] (0.1,-0.9) circle (2pt);
       \fill[black] (-0.1,-0.9)  coordinate(bottom) circle (2pt);
    
    \fill[black] (0,2) coordinate(topmost) circle (2pt);
    \fill[black] (0,-2) coordinate(bottommost) circle (2pt);
    
         
      \fill[black] (1.1,-0.1) circle (2pt);
      \fill[black] (0.9,-0.1) circle (2pt);
      \fill[black] (1.1,0.1) circle (2pt);
       \fill[black] (0.9,0.1) coordinate(right) circle (2pt);


     \fill[black] (-0.9,0.1) circle (2pt);
     \fill[black] (-1.1,0.1) circle (2pt);
      \fill[black] (-0.9,-0.1)coordinate(left) circle (2pt);
       \fill[black] (-1.1,-0.1) circle (2pt);
       
	\fill[black] (2,0) coordinate(rightmost) circle (2pt);
     \fill[black] (-2,0) coordinate(leftmost) circle (2pt);
       
    \draw[Stealth-, blue, very thin, shorten >=3pt, shorten <=3pt] (left) to[bend right=20] (bottom);
    \draw[-Stealth, blue, very thin, shorten >=3pt, shorten <=3pt] (top) to[bend left=20] (right);
    
     \draw[-Stealth,blue, very thin, shorten >=3pt, shorten <=3pt] (topmost) to[bend left=20] node[midway, right, font=\small] {$\psi_{\mathrm{flip}}$}  (rightmost);
     \draw[Stealth-,blue, very thin, shorten >=3pt, shorten <=3pt] (leftmost) to[bend right=20] (bottommost);

\end{tikzpicture}
\\[5pt]
 \captionof{figure}{$\mathrm{CFK}^{\infty}(\mathcal{B}_2)$ along with its flip-map $\psi_{\mathrm{flip}}$ drawn in blue}
\end{minipage}
\vspace{3pt}
\subsection{Flip map of Borromean knot}
In this section, we will describe the flip map of the Borromean knot, as given in \cite{integer}. We will work with $\mathbb{F}=\frac{\mathbb{Z}}{2\mathbb{Z}}{•}$ coefficients throughout. Let $\{A_i, B_i\}_{i=1}^g$ be the dual basis to the standard symplectic basis of $H_1(\Sigma_g)$. Then $H^1(\Sigma_g)=\mathbb{F}\langle A_1,B_1,A_2,B_2, \cdots, A_g,B_g\rangle$. We have the map:
\begin{align*}
    I: H^1(\Sigma)&\to H^1(\Sigma); \\
   I(A_i) & = B_i, \\
   I(B_i) & = A_i.
\end{align*}
$I$ induces a map on $\Lambda^kH^1(\Sigma)$ by letting it commute with the wedge product. Let $*:\Lambda^kH^1(\Sigma)\to \Lambda^{2g-k}H^1(\Sigma)$ be the Hodge star operator. The flip map is as follows.
$$\psi_{\mathrm{flip}}: C\{i,j\}\to C\{j,i\};$$
$$\psi_{\mathrm{flip}}(\omega\otimes U^{-i})=(*I\omega)\otimes U^{-j}.$$
We will start with a sample calculation for the genus 2 Borromean knot.
   \begin{align*}
       \Lambda^0H^1&\cong\mathbb{F}. \\
       \Lambda^1H^1&\cong\mathbb{F}^4=\langle A_1,B_1,A_2,B_2\rangle .\\
       \Lambda^1H^2&\cong\mathbb{F}^6=\langle A_1\wedge B_1,A_2\wedge B_2,A_1\wedge A_2, B_1\wedge B_2,A_1\wedge B_2, B_1\wedge A_2\rangle .\\
       \Lambda^1H^3&\cong\mathbb{F}^4=\langle A_1\wedge B_1\wedge A_2,B_1\wedge A_2\wedge B_2,A_1\wedge B_1\wedge B_2,A_1\wedge A_2\wedge B_2\rangle .\\
       \Lambda^1H^4&\cong\mathbb{F}=\langle A_1\wedge B_1\wedge A_2\wedge B_2\rangle .\\
   \end{align*} 
With the above basis, we write down the maps $I$ and $*$ in Table 1.
\begin{table}[h!]
\centering
\begin{tabular}{||c c c ||}
\hline
   Basis element  & $I$ & $*$  \\
\hline 
1 & 1 & $A_1\wedge B_1\wedge A_2\wedge B_2$ \\
$A_1$ & $B_1$ & $B_1\wedge A_2 \wedge B_2$  \\
$B_1$ & $A_1$ & $A_1\wedge A_2 \wedge B_2$ \\
$A_2$ & $B_2$ & $A_1\wedge B_1 \wedge B_2$  \\
$B_2$ & $A_2$ & $A_1\wedge B_1 \wedge A_2$ \\
$A_1\wedge B_1$ & $A_1\wedge B_1$ & $A_2\wedge B_2$ \\
$A_2\wedge B_2$ & $A_2\wedge B_2$ & $A_1\wedge B_1$ \\
$A_1\wedge A_2$ & $B_1\wedge B_2$ & $B_1\wedge B_2$ \\
$B_1\wedge B_2$ & $A_1\wedge A_2$ & $A_1\wedge A_2$ \\
$A_1\wedge B_2$ & $B_1\wedge A_2$ & $B_1\wedge A_2$ \\
$B_1\wedge A_2$ & $A_1\wedge B_2$ & $A_1\wedge B_2$ \\
$A_1\wedge B_1 \wedge A_2$ & $B_1\wedge A_1\wedge B_2$ & $B_2$\\
$A_1\wedge B_1 \wedge B_2$ & $B_1\wedge A_1\wedge A_2$ & $B_1$\\
$A_1\wedge A_2 \wedge B_2$ & $B_1\wedge A_2\wedge B_2$ & $A_2$\\
$B_1\wedge A_2 \wedge B_2$ & $A_1\wedge A_2\wedge B_2$ & $A_1$\\
$A_1\wedge B_1\wedge A_2\wedge B_2$ & $A_1\wedge B_1\wedge A_2\wedge B_2$ & 1 \\
\hline
\end{tabular}
\caption{I and * maps for Genus 2 }
\end{table}
Then we calculate the flip map. For $j=-2$, $$\psi_{\mathrm{flip}}: C\{0,-2\}\cong \Lambda^0\mathbb{F}^4\to C\{-2,0\}\cong U^{2}\otimes \Lambda^4\mathbb{F}^4 ;$$
$$1\mapsto U^2\otimes A_1\wedge B_1\wedge A_2\wedge B_2 .$$
For $j=-1$, $$\psi_{\mathrm{flip}}: C\{0,-1\}\cong \Lambda^1\mathbb{F}^4\to C\{-1,0\}\cong U^{1}\otimes \Lambda^3\mathbb{F}^4  ;$$
$$A_1\mapsto U\otimes(A_1\wedge A_2\wedge B_2),$$
$$B_1\mapsto  U\otimes(B_1\wedge A_2\wedge B_2),$$
$$A_2\mapsto  U\otimes(A_1\wedge B_1\wedge A_2),$$
$$B_2\mapsto  U\otimes(A_1\wedge B_1\wedge B_2).$$
For $j=0$, $$\psi_{\mathrm{flip}}: C\{0,0\}\cong \Lambda^2\mathbb{F}^4\to C\{0,0\}\cong U^{0}\otimes \Lambda^2\mathbb{F}^4 ;$$
$$A_1\wedge B_1\mapsto A_2\wedge B_2,$$
$$A_2\wedge B_2\mapsto A_1\wedge B_1,$$ 
$$A_1\wedge A_2\mapsto A_1\wedge A_2,$$ 
$$B_1\wedge B_2\mapsto B_1\wedge B_2,$$ 
$$A_1\wedge B_2\mapsto A_1\wedge B_2,$$ 
$$B_1\wedge A_2\mapsto B_1\wedge A_2.$$
For $j=1$, $$\psi_{\mathrm{flip}}: C\{0,1\}\cong \Lambda^3\mathbb{F}^4\to C\{1,0\}\cong U^{-1}\otimes \Lambda^1\mathbb{F}^4;$$
$$A_1\wedge B_1\wedge A_2 \mapsto U^{-1}\otimes A_2,$$
$$A_1\wedge B_1\wedge B_2\mapsto  U^{-1}\otimes B_2,$$
$$A_1\wedge A_2\wedge B_2\mapsto  U^{-1}\otimes A_1,$$
$$B_1\wedge A_2\wedge B_2\mapsto  U^{-1}\otimes B_1.$$
For $j=2$, $$\psi_{\mathrm{flip}}: C\{0,2\}\cong \Lambda^4\mathbb{F}^4\to C\{2,0\}\cong U^{-2}\otimes \Lambda^0\mathbb{F}^4;$$
$$A_1\wedge B_1\wedge A_2\wedge B_2 \mapsto U^{-2}\otimes1.$$

We will now modify the labelling of generators so that the flip-map has a nice symmetric description. This description will make it easier to use the dual knot formula. We prove the following. 

\begin{lemma}\label{basis}

    For $ i > 0$, there is a labelling of generators of $C\{0,i\}=\{x_j\}_{j=1}^{2g\choose {g-i}}$, $C\{0,-i\}=\{\widetilde{x}_j\}_{j=1}^{2g\choose {g-i}}$, and $C\{0,0\}=\{y_j\}_{j=1}^{\frac{1}{2}{2g\choose g}-2^{g-1}}\cup \{\widetilde{y}_j\}_{j=1}^{\frac{1}{2}{2g\choose g}-2^{g-1}} \cup \{f_j\}_{j=1}^{2^g}$ such that the flip-map of $\mathcal{B}_g$ is given as follows.
    \begin{align*}
        \psi_{\mathrm{flip}}:C\{0,i\}&\to C\{i,0\}\cong U^{-i}C\{0,-i\}; \\
        \psi_{\mathrm{flip}}(x_j)&=U^{-i}\widetilde{x}_j, \\
        \psi_{\mathrm{flip}}(\widetilde{x}_j)&=U^{i}x_j, \\
        \psi_{\mathrm{flip}}(y_j)&=\widetilde{y}_j,\\
        \psi_{\mathrm{flip}}(\widetilde{y}_j)&=y_j,\\
        \psi_{\mathrm{flip}}(f_j)&=f_j,
    \end{align*}
for all $i \in \mathbb{Z}$.

\end{lemma}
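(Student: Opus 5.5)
The plan is to work entirely with the monomial basis of $\Lambda^*H^1(\Sigma_g)$ over $\mathbb{F}$ and to show that the composite $J := *\circ I$ acts on it as an involutive permutation. The labelling in Lemma~\ref{basis} is then read off from the cycle structure of this permutation.

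\textbf{Step 1: monomials and the action of $I$ and $*$.} A monomial basis element of $\Lambda^kH^1(\Sigma)$ is a wedge of a $k$-element subset $S\subseteq\{A_1,B_1,\dots,A_g,B_g\}$. Since we work over $\mathbb{F}$, signs and orderings are irrelevant.
\begin{itemize}
\item $I$ sends $S$ to $I(S)$, the subset obtained by swapping $A_m\leftrightarrow B_m$ for each $m$.
\item $*$ sends $S$ to its complement $S^c$, as one can check against Table~1.
\end{itemize}
Hence $J(S)=I(S)^c$. Because $I$ commutes with taking complements and $I^2=\mathrm{Id}$, we get
\[
J^2(S)=\bigl(I(I(S)^c)\bigr)^c=I(I(S))=S.
\]
So $J$ is an involution on monomials, carrying degree $k$ to degree $2g-k$. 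By definition, $\psi_{\mathrm{flip}}(\omega\otimes U^{-a})=J(\omega)\otimes U^{-b}$ for $\omega\in C\{a,b\}$, so $\psi_{\mathrm{flip}}$ is $U$-equivariant. It therefore suffices to specify the labels on the slices $C\{0,\cdot\}$.

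\textbf{Step 2: the off-diagonal case $i>0$.} Here $C\{0,i\}=\Lambda^{g+i}$ and $C\{0,-i\}=\Lambda^{g-i}$, and these have equal rank ${2g\choose g+i}={2g\choose g-i}$.
\begin{itemize}
\item Let $\{x_j\}$ be the monomials of degree $g+i$, and set $\widetilde{x}_j:=J(x_j)\in\Lambda^{g-i}=C\{0,-i\}$. Since $J$ is a bijection $\Lambda^{g+i}\to\Lambda^{g-i}$ on monomials, $\{\widetilde{x}_j\}$ is a basis.
\item For $x_j\in C\{0,i\}$ the formula gives $\psi_{\mathrm{flip}}(x_j)=J(x_j)\otimes U^{-i}=U^{-i}\widetilde{x}_j$.
\item For $\widetilde{x}_j\in C\{0,-i\}$ the formula gives $\psi_{\mathrm{flip}}(\widetilde{x}_j)=U^{i}J(\widetilde{x}_j)=U^ix_j$, using $J^2=\mathrm{Id}$.
\end{itemize}
The statement for negative $i$ is the same pair of identities with the roles of $x$ and $\widetilde{x}$ exchanged.

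\textbf{Step 3: the diagonal slice $C\{0,0\}=\Lambda^g$.} Here $J$ is an involution of the ${2g\choose g}$ degree-$g$ monomials, so they split into fixed points and $2$-cycles.
\begin{itemize}
\item A monomial $S$ is fixed exactly when $S=I(S)^c$, i.e. when for every $m$ exactly one of $A_m,B_m$ lies in $S$.
\item This gives $2^g$ fixed points; label them $f_j$, so $\psi_{\mathrm{flip}}(f_j)=f_j$.
\item The remaining ${2g\choose g}-2^g$ monomials form $\tfrac12{2g\choose g}-2^{g-1}$ transpositions. In each pair, choose one element as $y_j$ and the other as $\widetilde{y}_j$; then $\psi_{\mathrm{flip}}$ swaps them.
\end{itemize}
This is consistent with the genus $2$ computation: $A_1\wedge B_1\leftrightarrow A_2\wedge B_2$ is the single pair, and the other four monomials are the fixed points.

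The only step needing real care is Step~1: confirming that the Hodge star in \cite{integer}, reduced mod $2$, really is the complement map on monomials, and that the composite order $*\circ I$ matches the given formula. I would verify this against Table~1. After that, the counting and labelling are immediate.
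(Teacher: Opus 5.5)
Your proposal is correct and follows the same route as the paper: monomial basis, the observation that $*\circ I$ is an involution, $\widetilde{x}_j$ defined as the image of $x_j$, and on $\Lambda^g$ a split into the $2^g$ fixed monomials (exactly one of $A_m,B_m$ for each $m$) and swapped pairs. Your version is a bit cleaner than the paper's, since identifying $*$ with the complement map mod $2$ gives $J^2=\mathrm{Id}$ and bijectivity on monomials directly, whereas the paper justifies that the $\widetilde{x}_j$ form a basis by appealing to $\psi_{\mathrm{flip}}$ being a quasi-isomorphism.
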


\begin{proof}

Let $A_i$ and $B_i$ be duals to the standard symplectic basis of $H_1(\Sigma;\mathbb{Z})$. Then $\{ A_1,B_1,\ldots, A_g,B_g\}$ is a basis for $H^1(\Sigma_g;\mathbb{Z})$. Since $*I*I=\mathrm{Id}$, we see that by a suitable reordering of $\{ A_1,B_1,\ldots, A_g,B_g\}$, $\psi_{\mathrm{flip}}$ is realized by the $i$-$j$ symmetry when $i\neq j$. More concretely, for $ i>0$, pick a basis for $C\{0,i\}\cong \Lambda^{g+i}H^1(\Sigma_g)$ arising from the symplectic basis. Since $\psi_{\mathrm{flip}}$ is a quasi-isomorphism, a basis element $x\in C\{0,i\}$ defines a basis element for $C\{0,-i\}$ by $\widetilde{x}:=\psi_{\mathrm{flip}}(x)U^i$. Subsequently we have the following.
$$\psi_{\mathrm{flip}}:C\{i,0\}\to C\{0,i\}\cong U^{-i}C\{0,-i\};$$
$$\psi_{\mathrm{flip}}(x)=U^{-i}\widetilde{x}.$$
At $C\{0,0\}=\Lambda^g \mathbb{Z}^{2g}$, the flip map is more subtle. Considering the generating set of $\Lambda^g\mathbb{Z}^{2g}$ coming from the symplectic basis, a generator is fixed by $\psi_{\mathrm{flip}}$ iff it does not contain a term of the form $A_i\wedge B_i$. In other words, $x_1\wedge \ldots\wedge x_g \in \Lambda^g\mathbb{Z}^{2g}$ is fixed by  $\psi_{\mathrm{flip}}$ iff each $x_i$ is either $A_i$ or $B_i$. There are $2^g$ elements of this form. The remaining ${2g \choose g} - 2^g$ elements are swapped among themselves. We conclude that $C\{0,0\}=L\cup \widetilde{L}\cup F$, where $\psi_{\mathrm{flip}}$ sends elements of $L$ to $\widetilde{L}$(and vice-versa) and acts as the identity on $F$.
\end{proof}

\section{Dual Knot}\label{dual knot}
In this section, we shall utilize the dual knot formula of Hedden-Levine \cite{MattLevine} to calculate the knot Floer complex of $\mathcal{\widetilde{B}}^R_g$. The calculations for $\mathcal{\widetilde{B}}^L_g$ are analogous. Let $K$ be a null-homologous knot in a three-manifold $Y$ and let $K^\nu$ denote the dual knot of $-1$ surgery on $K$. The complexes $A^\infty_s$ and $B^\infty_s$ for $s\in \mathbb{Z}$ denote a copy of $\mathrm{CFK}^\infty(Y,K)$. $A^\infty_s$ and $B^\infty_s$ have two filtrations as given in \cite[Page 232]{MattLevine}. Substituting $d=1$, $k=-1$ and $s_l=l$ we get the following.
\begin{itemize}
    \item For $[x,i,j]\in A^\infty_s: \mathcal{I}[x,i,j]=max\{i,j-s\}$ and $\mathcal{J}[x,i,j]=max\{i-s,j-2s+1\}$
    \item For $[x,i,j]\in B^\infty_s: \mathcal{I}[x,i,j]=i$ and $\mathcal{J}[x,i,j]=i-s$
\end{itemize}

We use the convention that generators of the complex $A^\infty_i$ are labeled with subscript $i$, e.g $a_i \in A^\infty_i$, and generators of complex $B^\infty_i$ are labeled with subscript $i$ and superscript $'$, e.g $b_i'\in B^\infty_i$. Let $A_s$ (respectively $B_s$) denote the subcomplex of $A_s^\infty$ (respectively $B_s^\infty$) with $\mathcal{I}=0$. We have the following two maps.
\begin{align*}
    v_k&:A_k\to B_k,\\
    h_k& : A_k \to B_{k-1}.
\end{align*}
Here $v_k$ is the projection map and $h_k$ is an induced map by the flip-map. More precisely, $h_k(x)=U^k\psi_{\mathrm{flip}}(x)$. The generators of $K^\nu$ in Alexander grading $-k$ are in one-to-one correspondence with the generators in the homology of the mapping cone $MC(k)=\mathrm{Cone}(h_{k+1}\oplus v_k: A_k \oplus A_{k+1}\to B_{k})$ for each $-g\leq k\leq g$. The differentials are determined by the induced map of $\partial^\infty$. Let $\widetilde{gr}$ and $gr$ denote the Maslov grading of the generators of $K$ and $K^\nu$ respectively. We can calculate the Maslov grading by the following formula in \cite[Page 232]{MattLevine}.
\begin{itemize}
    \item If $x\in A^\infty_k$: $gr(x)=\widetilde{gr}(x)-k-k^2+1$
    \item If $y\in B^\infty_k$: $gr(y)=\widetilde{gr}(y)-k-k^2$
\end{itemize}

\begin{proof}[Proof of Theorem \ref{MT}]
From the mapping cone, we see that the Alexander gradings of the dual knot are between $-g$ and $g$. In Alexander grading $g$, there is a unique generator $x$ with $\partial^\infty(x)=0$. Next, we consider Alexander grading $g-1$. We observe that the homology of the mapping cone $MC(-g+1)$ consists of ${2g \choose0} +{2g\choose 1}$ generators $\in  B_{-g+1}\{i=0, g-1 \leq j\leq g\}$ belonging to the top two gradings. The differential vanishes on these generators. There is one generator of the form $x+y$ where $x\in A_{-g+1}$ and $y\in A_{-g+2}$ with $\partial^\infty (x+y)=h_{-g+1}(x)+v_{-g+2}(y)$. See Figure \ref{g-1}. 

\begin{figure}[h!]
\centering
\begin{tikzpicture}

\node[label=south:$B_{-g+1}$] (B2) at (-1,1) {
\begin{tikzpicture}[scale=1]
    \filldraw[teal!35] (0, -1.75) rectangle (1, 3.75);

    \begin{scope}[thin, black!50!white]
        \draw[<->] (-1.5, 0.5) -- (2.5, 0.5);
        \draw[<->] (0.5, -2) -- (0.5, 4);
    \end{scope}

    \node at (-0.5,-1.75) {\small $g-1$};
    
    \node[red] at (2.8,3.25)	{\small $\binom{2g}{0}$ elements};
    \node[red] at (2.8,2.25)	{\small $\binom{2g}{1}$ elements};
    
    \draw[-Stealth, red, very thin, shorten >=4pt, shorten <=4pt] (2,3.3) to[bend right=30] (0.5,3.3);
    \draw[-Stealth, red, very thin, shorten >=4pt, shorten <=4pt] (2,2.3) to[bend right=30] (0.5,2.3);
    
    \fill (0.5,3.25) circle (3pt);
    
    \fill (0.1,2.25) circle (3pt);
    \fill (0.9,2.25) circle (3pt);
    \foreach \x in {0.3, 0.4, 0.5, 0.6, 0.7} {
    \fill[black] (\x, 2.25) circle (0.7pt);}

\end{tikzpicture}
};

\node[label=north:$A_{-g+1}$] (A1) at (-5,9) {
\begin{tikzpicture}[scale=1]
    \filldraw[teal!35] (0, -2) rectangle (1, -1);
    \filldraw[black!35!white] (-1.5, 0) rectangle (1, -1);

    \begin{scope}[thin, black!50!white]
        \draw[<->] (-1.5, 2) -- (2.5, 2);
        \draw[<->] (0.5, -2.5) -- (0.5, 3);
    \end{scope}

    \node at (-0.50,-2) {\small $g-1$};
    \node at (-1.75,-0.5) {\small $g$};
    
    \fill  (0.5,-1.5) circle (3pt) node [right, font=\small] {$x$};

\end{tikzpicture}
};

\node[label=north:$A_{-g+2}$] (A2) at (3,9) {
\begin{tikzpicture}[scale=1]
    \filldraw[black!35!white] (0, -2) rectangle (1, 0);
    \filldraw[teal!35] (-1.5, 1) rectangle (1, 0);

    \begin{scope}[thin, black!50!white]
        \draw[<->] (-1.5, 2) -- (2.5, 2);
        \draw[<->] (0.5, -2.5) -- (0.5, 3);
    \end{scope}

    \node at (-0.50,-2) {\small $g-2$};
    \node at (-2,0.5) {\small $g-1$};

    \fill (-1.25,0.5) circle (3pt) node[right, font=\small] {$y$};

\end{tikzpicture}
};

		\draw[->] (A1) to node[pos=0.5, left]{$v_{-g+1}$} (B2);
        \draw[->] (A2) to node[pos=0.5, left]{$h_{-g+2}$} (B2);
        
\end{tikzpicture}
\caption{Mapping cone for Alexander grading $g-1$ highlighted in teal. We have only drawn the generators that survive the mapping cone.}
\label{g-1}
\end{figure}

Moving on to Alexander grading $g-2$, we make a similar observation. The homology of the mapping cone $MC(-g+2)$ consist of two sets of generators. There are  ${2g \choose0} + {2g\choose 1}+{2g\choose 2}$ generators in the top three gradings of  $B_{-g+2}\{i=0, g-2 \leq j \leq g\}$. $\partial^\infty$ vanishes on these generators. There are ${2g\choose 0}+{2g\choose 1}$ generators $\in A_{-g+2}\oplus A_{-g+3}$ satisfying $\partial^\infty\neq 0$. It is straightforward to generalize this to Alexander grading $k$, where $0\leq k \leq g$. Figure \ref{mappingcone} below shows the mapping cone. 

\begin{figure}[h!]
\centering
\begin{tikzpicture}

\node[label=south:$B_{-k-1}$] (B1) at (-5,2) {
\begin{tikzpicture}[scale=1]
    \filldraw[black!35!white] (0, -2) rectangle (1, 3);

    \begin{scope}[thin, black!50!white]
        \draw[<->] (-1.5, 1.5) -- (2.5, 1.5);
        \draw[<->] (0.5, -2) -- (0.5, 3);
    \end{scope}

    \node at (-0.5,-2) {\small $k+1$};

    \fill (-1,0) circle (3pt);
    \node[below,font=\footnotesize] at  (-1,0) {$h_{-k}(x^k_{l,m})$};
\end{tikzpicture}
};

\node[label=south:$B_{-k}$] (B2) at (0,2) {
\begin{tikzpicture}[scale=1]
    \filldraw[teal!35] (0, -2) rectangle (1, 3);

    \begin{scope}[thin, black!50!white]
        \draw[<->] (-1.5, 1.5) -- (2.5, 1.5);
        \draw[<->] (0.5, -2) -- (0.5, 3);
    \end{scope}

    \node at (-0.25,-2) {\small $k$};
    
    \fill (0.5,0) circle (3pt) node [right,font=\small] {$z$};

\end{tikzpicture}
};

\node[label=south:$B_{-k+1}$] (B3) at (5,2) {
\begin{tikzpicture}[scale=1]
    \filldraw[black!35!white] (0, -2) rectangle (1, 3);

    \begin{scope}[thin, black!50!white]
        \draw[<->] (-1.5, 1.5) -- (2.5, 1.5);
        \draw[<->] (0.5, -2) -- (0.5, 3);
    \end{scope}

    \node at (-0.5,-2) {\small $k-1$};
     \fill (-0.5,1) circle (3pt);
     \node[below,font=\footnotesize] at  (-1,1) {$v_{-k+1}(y^k_{l,m})$};
\end{tikzpicture}
};

\node[label=north:$A_{-k}$] (A2) at (-2.5,10) {
\begin{tikzpicture}[scale=1]
    \filldraw[teal!35] (0, -2) rectangle (1, 1);
    \filldraw[black!50!white] (-1.5, 0) rectangle (1, 1);

    \begin{scope}[thin, black!50!white]
        \draw[<->] (-1.5, 1.5) -- (2.5, 1.5);
        \draw[<->] (0.5, -2) -- (0.5, 3);
    \end{scope}

    \node at (-0.50,-1.5) {\small $k$};
    \node at (-2.2,0) {\small $k+1$};
    
    \fill (0.5,-0.5) circle (3pt);
    \node[below,font=\footnotesize] at  (1,-0.5) {$x^k_{l,m}$};
    
\end{tikzpicture}
};

\node[label=north:$A_{-k+1}$] (A3) at (2.5,10) {
\begin{tikzpicture}[scale=1]
    \filldraw[black!35!white] (0, -2) rectangle (1, 1);
    \filldraw[teal!35] (-1.5, 0.5) rectangle (1, 1.5);

    \begin{scope}[thin, black!50!white]
        \draw[<->] (-1.5, 1.5) -- (2.5, 1.5);
        \draw[<->] (0.5, -2) -- (0.5, 3);
    \end{scope}

    \node at (-0.50,-1.5) {\small $k-1$};
    \node at (-2,0.5) {\small $k$};

    \fill (-0.5,1) circle (3pt);
     \node[below,font=\footnotesize] at  (-0.5,1) {$y^k_{l,m}$};
\end{tikzpicture}
};

		\draw[->] (A2) to node[pos=0.5, left]{$h_{-k}$} (B1);
        \draw[->] (A3) to node[pos=0.5, left]{$h_{-k+1}$} (B2);
		\draw[->] (A2) to node[pos=0.5, left]{$v_{-k}$} (B2);
		\draw[->] (A3) to node[pos=0.5, left]{$v_{-k+1}$} (B3);
\end{tikzpicture}
\caption{Mapping cone for Alexander grading $k$ highlighted with teal. Observe that $v_{-k}(x^k_{l,m})=z=h_{-k+1}(y^k_{l,m})$}
\label{mappingcone}
\end{figure}
In the homology of the mapping cone $MC(-k)$, there are $\sum_{l=0}^{g-k} \binom{2g}{l}$ generators in $B_{-k}$ with $\partial^\infty=0$. These are the generators at the top of the vertical strip $B_{-k}\{i=0,k \leq j \leq g\}$. Let $Q(g,k,l)$ be the set containing the $2g\choose l$ generators in $B_{-k}\{i=0,j=g-l\}$ for $0\leq l \leq g-k$. We enumerate $Q(g,k,l)$ as $\{q^k_{l,1},\cdots,q^k_{l,{2g\choose l}}\}$. There are also $\sum_{l=0}^{g-k-1} \binom{2g}{l}$ generators in $MC(-k)$ with $\partial^\infty\neq 0$. These are in \[ A_{-k}\{i=0,-g\leq j\leq -k-1\}\oplus A_{-k+1}\{-g-k+1 \leq i \leq -2k,j=-k+1\} .\] For $0\leq l\leq g-k-1$, let $P(g,k,l)$ be a set containing the $2g\choose l$ generators in \[A_{-k}\{i=0, j=-g+l\}\oplus A_{-k+1}\{i=-g-k+1+l,j=-k+1\} \] that are of the form $p^k_{l,m}=x^k_{l,m}+y^k_{l,m}$ where $x^k_{l,m}\in A_{-k}\{i=0, j=-g+l\}$ and $y^k_{l,m}\in A_{-k+1}\{i=-g-k+1+l,j=-k+1\}$. We enumerate $P(g,k,l)$ as $\{p^k_{l,1},\cdots,p^k_{l,{2g\choose l}}\}$. The only non-trivial term of $\partial^\infty$ on $P(g,k,l)$ is $h_{-k}(x^k_{l,m})+v_{-k+1}(y^k_{l,m})$.

Now that we have the set of generators, let us understand the differentials, which is made up of the $h$ and $v$ maps. If $x\in A_s$ is at $(0,t)$, then $h_s(x)=U^{s-t}\widetilde{x}'\in B_{s-1}$. Consider the element $x_{l,m}^{k}\in A_{-k}(i=0,j=-g+l)$, then $h_{-k}(x_{l,m}^{k})=U^{(-k)-(-g+l)}\widetilde{x}'=U^{g-k-l}\widetilde{x}'$ where $\widetilde{x}'$ has Alexander grading $k+1$. There are $2g \choose l$ such generators. Moreover, $\widetilde{x}'$ is a generator in $B_{-k-1}$ that survived in the mapping cone $MC(-k-1)$, i.e, $\widetilde{x}'$ is an element of $Q(g,k+1,l)$.

To analyze the $v$-map, we need to understand the image of $v_{-k+1}$ on $A_{-k+1}(-g-k+1 \leq i \leq -2k,j=-k+1)$. Let  $y^k_{l,m} \in A_{-k}(i=-g-k+1+l,j=-k+1)$.  Then we can calculate the differential, $\partial^\infty (y^k_{l,m})=U^{g+k-1-l}z$ where $z$ now has Alexander grading $k-1$ and in fact $z\in Q(g,k-1,l)$. Therefore in Alexander grading $k$ the differential is as follows.
\begin{align*}
    \partial^\infty(p_{l,m}^k)&=\partial^\infty(x_{l,m}^{k}+y^k_{l,m})\\
                        &=h_{-k}(x^k_{l,m})+v_{-k+1}(y^k_{l,m})\\
                        &=U^{g-k-l}q_{l,m}^{k+1}+U^{g+k-l-1} q_{l,m}^{k-1}
\end{align*}
where $m=1,\ldots, {2g\choose l}$ and $0\le l \le g-k-1$. The analysis is similar for Alexander grading $-k$, and we define $P(g,-k,l)$ and $Q(g,-k,l)$ as before. The differentials are given as follows.
\[
\partial^\infty(p_{l,m}^{-k})=U^{g-(-k)-l}q_{l,m}^{-k+1}+U^{g+(-k)-l-1} q_{l,m}^{-k-1}
\]
where $m=1,\ldots, {2g\choose l}$ and $0\le l \le g-(-k)-1$.

Using the formula for Maslov gradings we conclude the following.
\begin{itemize}
    \item The Maslov grading of elements in $P(g,k,l)$ is $-g+k-k^2+1+l$.
    \item The Maslov grading of elements in $Q(g,k,l)$ is $g+k-k^2-l$.
\end{itemize}
This completes the proof of the theorem.
\end{proof}

We can use the same methods to calculate the dual knot of $+1$ surgery to $\mathcal{{B}}_g$. For brevity, we omit the details and write the statement below.
\begin{theorem}
\label{MT2}
At each Alexander grading $-g\leq k\leq g$, $\mathrm{CFK}^{\infty}(\mathcal{\widetilde{B}}^L_g)$ is generated over $\mathbb{F}[U,U^{-1}]$ by the basis \\ $\bigcup_{l=0}^{g-|k|}P(g,k,l)\bigsqcup\bigcup_{l=0}^{g-|k|-1}Q(g,k,l)$.
\begin{enumerate}
\item The Maslov grading of generators in $P(g,k,l)$ is $-g+k+k^2+l$.
\item The Maslov grading of generators in $Q(g,k,l)$ is $g+k+k^2-1-l$.
\item The differential of a generator $p^k_{l,j} \in P(g,k,l)$ is given as follows. 
		\[
		\partial^\infty(p^k_{l,j}) =U^{g-k-l}(U^{2k+1}q^{k+1}_{l,j} + q^{k-1}_{l,j})
		\]
\item  The differential is zero on the subspace generated by $Q(g,k,l)$
\end{enumerate}
\end{theorem}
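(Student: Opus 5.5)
The plan is to repeat the argument in the proof of Theorem \ref{MT}, replacing the $-1$ surgery data with the $+1$ surgery data of the Hedden--Levine dual knot formula \cite{MattLevine}.

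First I will substitute $d=1$, $k=+1$ into the filtration formulas of \cite[Page 232]{MattLevine}. This gives new pairs $(\mathcal I,\mathcal J)$ on $A^\infty_s$ and $B^\infty_s$. Because the sign of the framing changes, the $\mathcal J$-filtration on $A_s$ shifts by $+2s$ rather than $-2s+1$, and the Maslov grading corrections become roughly $gr(x)=\widetilde{gr}(x)+k^2-k$ on $A$ and $gr(y)=\widetilde{gr}(y)+k^2-k-1$ on $B$, up to the normalization in \cite{MattLevine}. For $+1$ surgery the roles of $h$ and $v$ in the cone are reversed, so the generators of Alexander grading $k$ correspond to the homology of
\[
\mathrm{Cone}\bigl(v_k\oplus h_{k}: A_k\to B_k\oplus B_{k-1}\bigr),
\]
a single $A$ mapping to two $B$'s, instead of two $A$'s mapping to one $B$.

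Next I will compute the homology of each cone using the flip map from Lemma \ref{basis}. The map $v_k$ is the projection onto $i=0$. The map $h_k=U^k\psi_{\mathrm{flip}}$ pairs $x_j$ with $\widetilde{x}_j$ and $y_j$ with $\widetilde y_j$, and fixes each $f_j$. Both $v_k$ and $h_k$ are therefore diagonal in the relabelled basis, so the cone splits into small pieces, one for each basis element. I expect the following count.

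\begin{enumerate}
\item The generators of $A_k$ in the strip $j\ge k$ map isomorphically under $v_k$ and cancel against $B_k$.
\item Those in the strip $j\le k$ cancel under $h_k$ against $B_{k-1}$.
\item What survives in $A_k$ is the "corner" of $\sum_{l=0}^{g-|k|}\binom{2g}{l}$ generators, each mapping nontrivially to both $B$'s. These form $P(g,k,l)$.
\item What survives in the $B$'s are sums $b+b'$ whose classes are not hit. There are $\sum_{l=0}^{g-|k|-1}\binom{2g}{l}$ of them, and they form $Q(g,k,l)$.
\end{enumerate}

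This explains why the index ranges of $P$ and $Q$ are swapped relative to Theorem \ref{MT}. The differential of $p^k_{l,j}$ then has two components: $v$ lands in Alexander grading $k-1$ with trivial $U$-power beyond $U^{g-k-l}$, and $h$ lands in grading $k+1$ with the extra factor $U^{k}\cdot U^{k+1}$ coming from the $\psi_{\mathrm{flip}}$ shift. This produces $U^{g-k-l}(U^{2k+1}q^{k+1}_{l,j}+q^{k-1}_{l,j})$. The grading formulas follow by applying the Maslov shift to the Borromean gradings: $\widetilde{gr}=g-l$-type values at the top of the strips, and the reversed values at the bottom.

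The main obstacle is the bookkeeping of the $U$-exponents and of the $\mathcal J$-filtration in the $+1$ case. I must check that the surviving elements of one cone are exactly the targets $q^{k\pm1}_{l,j}$ produced by the other cones, and that the indexing $l$ matches across adjacent Alexander gradings. To settle this I will first verify the edge cases $k=\pm g$ and $k=\pm(g-1)$ explicitly, as was done for Figure \ref{g-1}. I will then check the symmetry $k\mapsto -k$, which should be realized by $\psi_{\mathrm{flip}}$. Finally, I will check that $\partial^\infty\circ\partial^\infty=0$ holds automatically, since the $Q$'s are cycles.
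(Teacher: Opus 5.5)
Your overall plan (rerun the proof of Theorem \ref{MT} with the $+1$ data of \cite{MattLevine}) is what the paper intends, since it omits the details. Your Maslov shifts $\widetilde{gr}+s^2-s$ on $A_s$ and $\widetilde{gr}+s^2-s-1$ on $B_s$ are also right. The gap is in how you identify the Alexander-grading-$k$ piece of the cone, and your whole count rests on it.

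For $+1$ surgery the map is $h_s\colon A_s\to B_{s+1}$, not $B_{s-1}$. The filtrations are $\mathcal{I}=\max\{i,j-s\}$, $\mathcal{J}=\max\{i+s-1,j\}$ on $A^\infty_s$, and $\mathcal{I}=i$, $\mathcal{J}=i+s-1$ on $B^\infty_s$. So $\mathcal{J}-\mathcal{I}\equiv s-1$ on all of $B_s$; in the $-1$ case it is likewise constant, equal to $-s$. By contrast, $A_s$ straddles two gradings: its horizontal arm $\{j=s,\ i\le 0\}$ has grading $s$, and its vertical arm $\{i=0,\ j<s\}$ has grading $s-1$. A piece of fixed Alexander grading therefore contains exactly one $B$. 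For grading $k$ it is
\[
h_k+v_{k+1}\colon A_k\{j=k,\ i\le 0\}\oplus A_{k+1}\{i=0,\ j\le k\}\to B_{k+1},
\]
again two $A$'s into one $B$, with $h$ acting on a horizontal arm and $v$ on a vertical arm exactly as for $-1$ surgery. Your cone $A_k\to B_k\oplus B_{k-1}$ is not such a piece, and your description of its homology does not hold together. Elements of $A_k$ ``mapping nontrivially to both $B$'s'' are not cycles of a cone, and the $Q$-generators are single elements of $B_{k+1}$, not sums $b+b'$.

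With the correct piece the count is immediate. In $B_{k+1}$, $v_{k+1}$ covers heights $j\le k$ and $h_k$ covers $j\le -k$.
\begin{itemize}
\item The overlap $j\le -|k|$ gives $P(g,k,l)$, $0\le l\le g-|k|$. These are sums $x+y$ with $y$ at $(0,-g+l)$ in $A_{k+1}$, $x$ at $(k-g+l,k)$ in $A_k$, and $h_k(x)=v_{k+1}(y)$.
\item The uncovered heights $j\ge |k|+1$ give $Q(g,k,l)$, $0\le l\le g-|k|-1$.
\end{itemize}
This one-step shift of the nested images, not a swap of $h$ and $v$, is what moves the index ranges relative to Theorem \ref{MT}.

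The differential is the part of $h+v$ that leaves grading $k$. First, $v_k(x)=U^{g-k-l}q^{k-1}_{l,m}$ in $B_k$. Second, $h_{k+1}(y)$ is $U^{(k+1)-(-g+l)}=U^{g-k-l}U^{2k+1}$ times the height-$(g-l)$ generator of $B_{k+2}$, which is $q^{k+1}_{l,m}$ when that generator survives. The gradings follow by applying your shifts in $A_{k+1}$ and $B_{k+1}$; for example, $Q(g,k,l)$ gets $(g-l)+(k+1)^2-(k+1)-1$.

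In your sketch, by contrast, the factor ``$U^k\cdot U^{k+1}$'' and the grading formulas are read off from the statement. They cannot be derived from the cone you wrote down.
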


\section{Flip map of the dual Borromean knot}\label{flip_dual}

The flip map $\psi_{\mathrm{flip}}$ is an invariant of the knot up to filtered chain homotopy equivalence. When restricted to the subcomplex $C\{i=0\}$, the flip map gives a Maslov grading preserving quasi-isomorphism $\psi_{\mathrm{flip}}: C\{i=0\}\to C\{j=0\}$. We will see that these algebraic conditions give us the necessary constraints on the flip-map.
\begin{proof}[Proof of Proposition \ref{flipmap}]
$\mathcal{\widetilde{B}}^R_g$ is defined as the dual knot of $-1$ surgery on $\mathcal{B}_g$. Hence, it suffices to illustrate that the condition on the flip-map is necessary for the dual knot of $+1$ surgery on $\mathcal{\widetilde{B}}^R_g$ to be the Borromean knot $\mathcal{B}_g$. From \cite{MattWatson}, a complex represents the Borromean knot iff there is a single generator in the top (respectively bottom) Alexander grading with no differentials mapping from (respectively to) it. Let us analyze the mapping cone for the dual knot of $+1$ surgery on $\mathcal{\widetilde{B}}^R_g$ in each of these extremal Alexander gradings, which in our case are $-g$ and $g$.
\begin{figure}[h!]
\centering
\begin{tikzpicture}

\node[label=south:$B_{-g+2}$] (B2) at (-1,1) {
\begin{tikzpicture}[scale=1]
    \filldraw[teal!35] (0, -3.25) rectangle (1, 3.75);

    \begin{scope}[thin, black!50!white]
        \draw[<->] (-1.5, 0.5) -- (2.5, 0.5);
        \draw[<->] (0.5, -3.5) -- (0.5, 4);
    \end{scope}

    \node at (-0.75,-3.25) {\small $-g+1$};
    
    \fill (0.5,-3) circle (3pt)node[ right, font=\tiny]{$q_{0,1}^{-g}$};
    
    \fill (0.5,-2.2) circle (3pt)node[below right, font=\tiny]{$p_{0,1}^{-g+1}$};
    \draw[->, blue,thick, bend right, shorten >=4pt, shorten <=4pt] (0.5, -2.2) to (0.5, -3);
    
    \fill[red!80!black] (0.1,-2) circle (3pt)node[ left, font=\tiny]{$q_{1,1}^{-g+1}$};
    \fill[red!80!black] (0.9,-2) circle (3pt)node[ right, font=\tiny]{$q_{1,2g}^{-g+1}$};
    \foreach \x in {0.3, 0.4, 0.5, 0.6, 0.7} {
    \fill[black] (\x, -2) circle (0.7pt);}

    \fill[red!80!black] (0.5,-1) circle (3pt) node[above, yshift=3pt, font=\tiny,fill=teal!35, inner sep=0]{$p_{0,1}^{-g+2}$};
    \fill (0.1,-1.2) circle (3pt)node[ left, font=\tiny]{$p_{1,1}^{-g+2}$};
    \fill (0.9,-1.2) circle (3pt)node[ right, font=\tiny]{$p_{1,2g}^{-g+2}$};
    \foreach \x in {0.3, 0.4, 0.5, 0.6, 0.7} {
    \fill[black] (\x, -1.2) circle (0.7pt);}
    \draw[->, blue,thick, bend right, shorten >=4pt, shorten <=4pt] (0.1, -1.2) to (0.1, -2);
    \draw[->, blue,thick, bend right, shorten >=4pt, shorten <=4pt] (0.9, -1.2) to (0.9, -2);
    
    \fill[red!80!black] (0.5,2.75) circle (3pt) node[right, font=\tiny]{$p_{0,1}^{g-1}$};
\end{tikzpicture}
};

\node[label=north:$A_{-g+1}$] (A1) at (-5,9) {
\begin{tikzpicture}[scale=1]
    \filldraw[black!35!white] (0, -2) rectangle (1, -1);
    \filldraw[teal!35] (-1.5, 0) rectangle (1, -1);

    \begin{scope}[thin, black!50!white]
        \draw[<->] (-1.5, 2) -- (2.5, 2);
        \draw[<->] (0.5, -2.5) -- (0.5, 3);
    \end{scope}

    \node at (-0.50,-2) {\small $-g$};
    \node at (-2.2,-0.5) {\small $-g+1$};
    
    \fill  (0.5,-1.5) circle (3pt) node [right, font=\tiny] {$q_{0,1}^{-g}$};
    
    \fill (0.5 , -0.5) circle (3pt) node [right, font=\tiny] {$p_{0,1}^{-g+1}$};
    
    \draw[->, blue,thick, bend right, shorten >=4pt, shorten <=4pt] (0.5, -0.5) to (0.5, -1.5);
    
\end{tikzpicture}
};

\node[label=north:$A_{-g+2}$] (A2) at (3,9) {
\begin{tikzpicture}[scale=1]
    \filldraw[teal!35] (0, -2) rectangle (1, 0);
    \filldraw[black!50!white] (-1.5, 1) rectangle (1, 0);

    \begin{scope}[thin, black!50!white]
        \draw[<->] (-1.5, 2) -- (2.5, 2);
        \draw[<->] (0.5, -2.5) -- (0.5, 3);
    \end{scope}
    
    \node at (-2.25,0.5) {\small $-g+2$};
    \node at (-0.60,-2) {\small $-g+1$};
    
    \fill[red!80!black] (0.1,-0.3) circle (3pt)node[ left, font=\tiny]{$q_{1,1}^{-g+1}$};
    \fill[red!80!black] (0.9,-0.3) circle (3pt)node[ right, font=\tiny]{$q_{1,2g}^{-g+1}$};
    \foreach \x in {0.3, 0.4, 0.5, 0.6, 0.7} {
    \fill[black] (\x, -0.3) circle (0.7pt);}

    \fill (0.5,-1.5) circle (3pt) node[right, font=\tiny] {$q_{0,1}^{-g}$};
    
    \fill (0.5 , -0.5) circle (3pt) node[below right, font=\tiny] {$p_{0,1}^{-g+1}$};
    
    \draw[->, blue,thick, bend right, shorten >=4pt, shorten <=4pt] (0.5, -0.5) to (0.5, -1.5);

\end{tikzpicture}
};

		\draw[->] (A1) to node[pos=0.5, left]{$h_{-g+1}$} (B2);
        \draw[->] (A2) to node[pos=0.5, left]{$v_{-g+2}$} (B2);
        
\end{tikzpicture}
\caption{Mapping cone for Alexander grading $-g+1$ highlighted in teal. The generators in red are the possible candidates for $h_{-g+1}(p_{0,1}^{-g+1})$. The blue arrows are internal differentials.}
\label{top}
\end{figure}

In the mapping cone for Alexander grading $-g$, there is a single generator $q_{0,1}^{-g}\in A_{-g+1}$. $q_{0,1}^{-g}$ is a boundary in $A_{-g+1}$ as $\partial(p_{0,1}^{-g+1})= q_{0,1}^{-g}$. See Figure \ref{top}. We need to ensure that $p_{0,1}^{-g+1}$ vanishes in the homology of the $-(g-1)$st mapping cone $\mathrm{MC}(-g+1)$ so that $q_{0,1}^{-g}$ is not a boundary in the mapping cone. The Maslov grading of $p_{0,1}^{-g+1}$ is $-g^2+1$. When we identify the horizontal strip $C\{j=-g+1\}$ with the vertical strip $C\{i=0\}$, there is an additional factor of $U^{-(g-1)}$. Since the flip-map is Maslov grading preserving, it must map $p_{0,1}^{-g+1}$ to a generator in the subcomplex $C\{i=0\}$ with Maslov grading $-g^2+2g-1$. Let us list the elements of $C\{i=0\}$ that have Maslov grading $-g^2+2g-1$ since these are precisely the possible images of $h_{-g+1}(p_{0,1}^{-g+1})$. There is one such element $p^{g-1}_{0,1}$ in Alexander grading $g-1$, there is another element $p^{-g+2}_{0,1}$ in Alexander grading $-g+2$ and there are $2g$ elements $q^{-g+1}_{1,1}, q^{-g+1}_{1,2},\ldots,q^{-g+1}_{1,2g}$ in Alexander grading $-g+1$. Now we show that regardless, $p_{0,1}^{-g+1}$ doesn't survive in the homology of the mapping cone $\mathrm{MC}(-g+1)$. Clearly, if the flip map sends $p_{0,1}^{-g+1}$ to an element with Alexander grading greater than $-g+1$, it cannot be in the image of $v_{-g+2}$ in the mapping cone $\mathrm{MC}(-g+1)$. This ensures that $p_{0,1}^{-g+1}$ vanishes in the  homology of $\mathrm{MC}(-g+1)$ since it cannot be a cycle. Also note that since these are not in the image of  $v_{-g+2}$, we do not get induced longer differentials after cancelling. For the remaining cases, observe that $p_{0,1}^{-g+1}$ survives when you take the homology in the horizontal strip, while $q^{-g+1}_{1,i}$'s have non-trivial differential on them in the vertical strip. Since the flip map is a quasi-isomorphism, $\psi_{\mathrm{flip}}$ cannot map $p_{0,1}^{-g+1}$ to these elements. Hence, we see that in all cases, $p_{0,1}^{-g+1}$ won't survive in the homology of the mapping cone for Alexander grading $-g+1$. Hence in the lowest Alexander grading, we have a single generator that is not in the image of the differential.

\begin{figure}[h!]
\centering
\begin{tikzpicture}

\node[label=south:$B_{g-2}$] (B1) at (-5,1) {
\begin{tikzpicture}[scale=1]
    \filldraw[blue!35] (0, -2) rectangle (1, 3);

    \begin{scope}[thin, black!50!white]
        \draw[<->] (-1.5, 0) -- (2.5, 0);
        \draw[<->] (0.5, -2.25) -- (0.5, 3.25);
    \end{scope}
    
    \node at (-0.5,-2) {\small $g-3$};
    
    \fill  (0.5,0.5) circle (3pt) node [right, font=\small] {$x_{g-2}'$};

\end{tikzpicture}
};

\node[label=south:$B_{g-2}$] (B2) at (0,1) {
\begin{tikzpicture}[scale=1]
    \filldraw[green!35] (0, -2) rectangle (1, 3);

    \begin{scope}[thin, black!50!white]
        \draw[<->] (-1.5, 0) -- (2.5, 0);
        \draw[<->] (0.5, -2.25) -- (0.5, 3.25);
    \end{scope}
    
    \node at (-0.5,-2) {\small $g-2$};
       
    \fill  (0.5,1.5) circle (3pt) node [right, font=\small] {$x_{g-1}'$};

\end{tikzpicture}
};

\node[label=south:$B_{g}$] (B3) at (5,1) {
\begin{tikzpicture}[scale=1]
    \filldraw[teal!35] (0, -2) rectangle (1, 3);

    \begin{scope}[thin, black!50!white]
        \draw[<->] (-1.5, 0) -- (2.5, 0);
        \draw[<->] (0.5, -2.25) -- (0.5, 3.25);
    \end{scope}
    
    \node at (-0.5,-2) {\small $g-1$};
    
    \fill  (0.5,2.5) circle (3pt) node [right, font=\small] {$x_g'$};
    
\end{tikzpicture}
};

\node[label=north:$A_{g-2}$] (A1) at (-5,8) {
\begin{tikzpicture}[scale=1]
    \filldraw[blue!35] (0, -1.5) rectangle (1, 0);
    \filldraw[green!35] (-1.5, 0) rectangle (1, 1);

    \begin{scope}[thin, black!50!white]
        \draw[<->] (-1.5, 0) -- (2.5, 0);
        \draw[<->] (0.5, -1.75) -- (0.5, 3.25);
    \end{scope}
    
    \fill  (0.5,0.5) circle (3pt) node [right, font=\small] {$x_{g-2}:=q^{g-2}_{0,1}$};

\end{tikzpicture}
};

\node[label=north:$A_{g-1}$] (A2) at (0,8) {
\begin{tikzpicture}[scale=1]
    \filldraw[green!35](0, -1.5) rectangle (1, 1);
    \filldraw[teal!35] (-1.5, 1) rectangle (1, 2);

    \begin{scope}[thin, black!50!white]
        \draw[<->] (-1.5, 0) -- (2.5, 0);
        \draw[<->] (0.5, -1.75) -- (0.5, 3.25);
    \end{scope}
    
    \fill  (0.5,1.5) circle (3pt) node [right, font=\small] {$x_{g-1}:=q^{g-1}_{0,1}$};

\end{tikzpicture}
};

\node[label=north:$A_{g}$] (A3) at (5,8) {
\begin{tikzpicture}[scale=1]
   \filldraw[teal!35] (0, -1.5) rectangle (1, 2);
   \filldraw[yellow!35]  (-1.5, 2) rectangle (1, 3);

    \begin{scope}[thin, black!50!white]
        \draw[<->] (-1.5, 0) -- (2.5, 0);
        \draw[<->] (0.5, -1.75) -- (0.5, 3.25);
    \end{scope}
    
    \node at (-1.75,2.5) {\small $g$};
    
    \fill  (0.5,2.5) circle (3pt) node [right, font=\small] {$x_g:=q^g_{0,1}$};

\end{tikzpicture}
};

		\draw[->] (A2) to node[pos=0.5, left]{$h_{g-1}$} (B3);
        \draw[->] (A1) to node[pos=0.5, left]{$h_{g-2}$} (B2);
        \draw[->] (A1) to node[pos=0.5, left]{$v_{g-2}$} (B1);
		\draw[->] (A2) to node[pos=0.5, left]{$v_{g-1}$} (B2);
		\draw[->] (A3) to node[pos=0.5, left]{$v_{g}$} (B3);
\end{tikzpicture}
\caption{Mapping cone for dual knot of +1 surgery on $\mathcal{\widetilde{B}}^R_g$.}
\label{bot}
\end{figure}

Let us now analyze the case of Alexander grading $g$. It is clear that in the mapping cone for  Alexander grading $g$ that there is a single generator $q_{g}^{0,1}\in A_g$. See Figure \ref{bot}. Let us denote this by $x_g$. The differential vanishes on $x_g$ in the mapping cone if its image under $v_g$ is a boundary, i.e. $\partial ^\infty(x_g)=v_g(x_g)=x_g'\in B_g$ is a boundary.  The Maslov grading of $x_g'$ is $-g^2+2g$. Observing the elements of the horizontal strip that survive in the homology of the mapping cone for Alexander grading $g-1$ and also map to $x_g'$ under $h_{g-1}$, there is only a single element $q_{0,1}^{g-1}$ of degree $-g^2+4g-2$, and indeed this is a cycle in the mapping cone. Denote this by $x_{g-1}$. We are forced to have $h_{g-1}(x_{g-1})=x_g'$. We are not quite done, since $\partial^\infty(x_{g-1})=h_{g-1}(x_{g-1})+v_{g-1}(x_{g-1})=x_g'+v_{g-1}(x_{g-1})$. So we need to show that $v_{g-1}(x_{g-1})=x_{g-1}'$ is a boundary. This boils down to finding an element $x_{g-2}\in A_{g-2}$ such that $h_{g-2}(x_{g-2})=v_{g-1}(x_{g-1})=x_{g-1}'$. Again, looking at Maslov degrees and elements in $A_{g-2}$ that survive in the mapping cone for Alexander grading $g-2$, there is a unique element $q_{0,1}^{g-2}$. This forces the flip map on $h_{g-2}(x_{g-2})=x_{g-1}'$. We keep repeating this process, observing that each step we only have one possible choice, which forces the $h$ map (which is basically the flip-map), and we end up with the following subcomplex. 
\[
\begin{tikzcd}[column sep=15pt, row sep=10pt]
	{\bullet x_{-(g-1)}} && {\bullet x_{-(g-2)}} && {\bullet x_{-(g-3)}} && {\bullet x_{g-2}} && {\bullet x_{g-1}} && {\bullet x_g} \\
	\\
	&&&&& \ldots \\
	&& {\bullet x'_{-(g-2)}} && {\bullet x'_{-(g-3)}} &&&& {\bullet x'_{g-1}} && {\bullet x_g'}
	\arrow["{{h_{-(g-1)}}}"{pos=0.4}, from=1-1, to=4-3]
	\arrow["{{v_{-(g-2)}}}"{pos=0.6}, from=1-3, to=4-3]
	\arrow["{{h_{-(g-2)}}}"{pos=0.4}, from=1-3, to=4-5]
	\arrow["{{v_{-(g-3)}}}"{pos=0.6}, from=1-5, to=4-5]
	\arrow["{{h_{g-2}}}"{pos=0.4}, from=1-7, to=4-9]
	\arrow["{{v_{g-1}}}"{pos=0.6}, from=1-9, to=4-9]
	\arrow["{{h_{g-1}}}"{pos=0.4}, color=blue, from=1-9, to=4-11]
	\arrow["{{v_g}}"{pos=0.6}, from=1-11, to=4-11]
\end{tikzcd}
\]
We can cancel the blue arrow to get a long red differential as follows.
\[
\begin{tikzcd}[column sep=15pt, row sep=10pt]
	{\bullet x_{-(g-1)}} && {\bullet x_{-(g-2)}} && {\bullet x_{-(g-3)}} && {\bullet x_{g-2}} &&  && {\bullet x_g} \\
	\\
	&&&&& \ldots \\
	&& {\bullet x'_{-(g-2)}} && {\bullet x'_{-(g-3)}} &&&& {\bullet x'_{g-1}}
	\arrow["{{h_{-(g-1)}}}"{pos=0.4}, from=1-1, to=4-3]
	\arrow["{{v_{-(g-2)}}}"{pos=0.6}, from=1-3, to=4-3]
	\arrow["{{h_{-(g-2)}}}"{pos=0.4}, from=1-3, to=4-5]
	\arrow["{{v_{-(g-3)}}}"{pos=0.6}, from=1-5, to=4-5]
	\arrow["{{h_{g-2}}}"{pos=0.4}, from=1-7, to=4-9]
	\arrow["{{\widetilde{v_g}}}"{pos=0.6},color=red, from=1-11, to=4-9]
\end{tikzcd}
\]

Iteratively repeating this process by canceling $h_{g-i}$, we end up with only $x_g$. Hence, in the end, when we compute the complex of the dual knot, the differential of $x_g$ is zero. This completes the proof. 
\end{proof}

Note that we have proved something stronger. Following the proof, we can also conclude that the conditions above on the flip map are not only necessary but also sufficient for the dual knot of $+1$ surgery on $\mathcal{\widetilde{B}}^R_g$ to be $\mathcal{B}_g$.

Proceeding as above, we can prove an analogous statement for the flip-map of $\mathcal{\widetilde{B}}^L_g$.

\begin{prop}
\label{flipmap2}
Let $\Theta_{k} \in C\{0,k\} \subseteq \mathrm{CFK}^{\infty}(\mathcal{\widetilde{B}}^L_g)$ denote the unique generator of $P(g,k,0)$ for Alexander grading $-g+1\leq k \leq g$. Then $\psi_{\mathrm{flip}}: C\{j=0\}\to C\{i=0\}$ is given by $\psi_{\mathrm{flip}}(U^{k}\Theta_k)=\Theta_{k-1}$.
\end{prop}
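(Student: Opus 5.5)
The plan is to mirror the proof of Proposition \ref{flipmap}, with the roles of the two surgeries exchanged. By construction $\mathcal{\widetilde{B}}^L_g$ is the dual knot of $+1$ surgery on $\mathcal{B}_g$. Hence the dual knot of $-1$ surgery on $\mathcal{\widetilde{B}}^L_g$ must be $\mathcal{B}_g$. I would build the Hedden--Levine mapping cone for this $-1$ surgery, using the filtrations $\mathcal{I},\mathcal{J}$ on $A^\infty_s,B^\infty_s$ listed in Section \ref{dual knot}, with $\mathrm{CFK}^\infty(\mathcal{\widetilde{B}}^L_g)$ as described in Theorem \ref{MT2}. By \cite{MattWatson}, the resulting complex must have a single generator in the top and bottom Alexander gradings, with no differential out of (respectively into) it. I would then show that this requirement forces $h_k$, and hence $\psi_{\mathrm{flip}}$, on the distinguished generators $\Theta_k\in P(g,k,0)$.

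In the first step I compute the Maslov gradings. By Theorem \ref{MT2}(1), $\Theta_k$ has grading $-g+k+k^2$. Multiplying by $U^k$ lowers this by $2k$, giving $-g-k+k^2$, which equals the grading $-g+(k-1)+(k-1)^2$ of $\Theta_{k-1}$. So the claimed formula is at least grading-consistent, and it lands in the correct filtration level $C\{0,k-1\}$.

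In the second step I treat one extremal Alexander grading of the $-1$ surgery cone, where a single generator sits, namely $\Theta_{\pm g}$ in $A_{\pm g}$. Its image under $v$ must be a boundary in the cone. Exactly as in the analysis of Figure \ref{bot}, I would list the elements of the horizontal strip in the adjacent $A$ complex that have the right Maslov grading and survive in the next mapping cone. Using the gradings from Theorem \ref{MT2} together with the requirement that $\psi_{\mathrm{flip}}$ be a quasi-isomorphism $C\{i=0\}\to C\{j=0\}$, I expect the only candidate to be $\Theta$ at the next Alexander level. This forces $h(\Theta_k)$ to equal the $B$-copy of $\Theta_{k-1}$. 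Iterating down the chain then forces every value $\psi_{\mathrm{flip}}(U^k\Theta_k)=\Theta_{k-1}$. Finally, cancelling the forced $h$ arrows successively, as in the zig-zag diagram in the proof of Proposition \ref{flipmap}, shows that the extremal generator survives with zero differential.

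In the third step I check the other extremal grading. There the single generator is the bottom $Q$-type class (or its analogue). I must show that the element that would kill it is not a cycle in the adjacent cone. As in the $p^{-g+1}_{0,1}$ argument, I would split into cases according to where the flip map can send that element. If its image lies in a higher Alexander grading, it lies outside the image of $v$. If the image has the same grading, it is excluded because the candidate targets carry nontrivial vertical differentials, while the source survives horizontal homology and $\psi_{\mathrm{flip}}$ is a quasi-isomorphism.

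I expect the main obstacle to be the uniqueness claim in the iterative step. For $\mathcal{\widetilde{B}}^L_g$ the $P$ and $Q$ gradings are shifted relative to the $R$ case, with a $+k^2$ term in place of $-k^2$. So I must carefully enumerate all surviving generators of the correct Maslov grading, including possible $Q(g,k',l)$ elements at other Alexander levels, and rule them out using the filtration constraint $\mathcal{J}$ and the quasi-isomorphism property. I would first try to rule out every target except $\Theta_{k-1}$ by grading alone, and fall back on the horizontal-versus-vertical homology argument where gradings coincide.
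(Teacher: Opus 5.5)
Your overall route matches the paper's. Its proof of this statement is just ``proceeding as above'', i.e.\ rerunning the proof of Proposition~\ref{flipmap} for $-1$ surgery on $\mathcal{\widetilde{B}}^L_g$. Your grading check is also correct: $U^k\Theta_k$ and $\Theta_{k-1}$ both have Maslov grading $-g-k+k^2$.

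\textbf{The mechanism does not transfer verbatim.} You carry the R-case mechanism over unchanged, and it does not fit the $-1$ cone. With the $-1$ surgery filtrations $\mathcal{I},\mathcal{J}$, the vertical arm of $A_s$ and all of $B_s$ lie in dual Alexander grading $-s$, while the corner and horizontal arm of $A_s$ lie in grading $-s+1$. Consequently:
\begin{itemize}
\item $A_{g+1}$ contributes nothing, so $MC(g)=\mathrm{Cone}(v_g\colon A_g^{\mathrm{vert}}\to B_g)$. Its homology is spanned by the copy $\Theta'_g\in B_g$, corrected by $q^{g-1}_{0,1}\in A_g$.
\item The top class is represented by $q^{-g+1}_{0,1}$ at the corner of $A_{-g+1}$, up to a correction in $B_{-g}$.
\end{itemize}
So there is no extremal generator ``$\Theta_{\pm g}\in A_{\pm g}$'' whose $v$-image must be killed. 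In the L case the forcing comes from the \emph{dual} Hedden--Watson condition: the bottom class must not be hit.
\begin{itemize}
\item The corner copy of $\Theta_g$ in $A_g$ satisfies $D(\Theta_g)=(\text{bottom class})+h_g(\Theta_g)$. Hence $h_g(\Theta_g)=\psi_{\mathrm{flip}}(U^g\Theta_g)$ must be nonzero in $MC(g-1)$.
\item Once $h_g(\Theta_g)=\Theta'_{g-1}=v_{g-1}(\Theta_{g-1})$, the corner copy of $\Theta_{g-1}$ faces the same requirement, and so on down to $k=-g+1$.
\item The zigzag is clean because, for $-g<k<g$, the internal differential of $\Theta_k$ is diagonal and so vanishes in the hat cone.
\end{itemize}
This is the zigzag of Proposition~\ref{flipmap} with the arrows reversed. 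Your third step has the condition backwards too. At the top grading the issue is the differential \emph{out of} $q^{-g+1}_{0,1}$, namely $v_{-g+1}(q^{-g+1}_{0,1})$, not an element killing it. That step is not needed for this necessity statement anyway.

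\textbf{Uniqueness fails already at $k=g$.} You hope to isolate $\Theta_{k-1}$ by gradings, but this does not work. The class $\Theta_{-g}$ has the same Maslov grading $g^2-2g$ as $U^g\Theta_g$, and it is nonzero in $H_*(C\{i=0\})$. It is ruled out as the value of $h_g(\Theta_g)$ only because its copy in $B_{g-1}$ is cancelled by $v_{g-1}$ against the vertical arm of $A_{g-1}$. That value would make the bottom class a boundary. But this cancellation creates no induced arrows, so the cone cannot detect a summand $\Theta'_{-g}$ added to $\Theta'_{g-1}$ either. The argument therefore pins down only the component of $h_k(\Theta_k)$ that survives in $MC(k-1)$. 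The exact equality $\psi_{\mathrm{flip}}(U^k\Theta_k)=\Theta_{k-1}$ needs an additional argument, or has to be read modulo such cancelled terms.
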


\section{Monodromy Detection}\label{section-proof}

In this section, we prove that the complex $\mathrm{CFK}^{\infty}(\mathcal{\widetilde{B}}^R_g)$ detects $\mathcal{\widetilde{B}}^R_g$.

\begin{proof}[Proof of Theorem \ref{detect}]
The strategy is as follows: Let $K\subseteq Y$ be any knot whose complex is given as in Theorem \ref{MT} and satisfies the flip-map constraint as in Proposition \ref{flipmap}. Consider the dual knot $K^\nu$ of $+1$ surgery on $K$. We saw in the previous section that with the constraints on the flip-map, $K^\nu$ is isotopic to $\mathcal{B}_g$. Now observe that the dual knot of $-1$ surgery on $K^\nu$ is isotopic to $K$. Hence, we conclude that $K$ is isotopic to $\mathcal{\widetilde{B}}^R_g$.
\end{proof}

We also have the following theorem for $\mathcal{\widetilde{B}}^L_g$.
\begin{theorem}
\label{detect2}
Let $K\subset Y$ be an arbitrary knot in some three-manifold $Y$. Suppose that $\mathrm{CFK}^{\infty}(K,Y)$ is filtered chain homotopic to $\mathrm{CFK}^{\infty}(\mathcal{\widetilde{B}}^L_g)$. Furthermore, assume that the flip map on  $\mathrm{CFK}^{\infty}(K,Y)$ satisfies Proposition \ref{flipmap2}. Then $K$ is isotopic to $\mathcal{\widetilde{B}}^L_g$.
\end{theorem}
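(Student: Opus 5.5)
The argument for Theorem \ref{detect2} mirrors the proof of Theorem \ref{detect}, with the roles of $\pm1$ surgery exchanged. Let $K\subset Y$ be a knot whose complex $\mathrm{CFK}^{\infty}(K,Y)$ is filtered chain homotopic to the complex of Theorem \ref{MT2}, and whose flip map satisfies the constraint of Proposition \ref{flipmap2}. I would form the dual knot $K^\nu$ of $-1$ surgery on $K$. Since $\mathcal{\widetilde{B}}^L_g$ is the dual of $+1$ surgery on $\mathcal{B}_g$, undoing this by $-1$ surgery on the dual should recover $\mathcal{B}_g$. The dual knot formula of Hedden--Levine \cite{MattLevine} (with $d=1$ and the appropriate sign) computes $\mathrm{CFK}^\infty(K^\nu)$ purely from $\mathrm{CFK}^\infty(K)$ and $\psi_{\mathrm{flip}}$. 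So it suffices to show that the resulting complex has a single generator in the top Alexander grading and a single generator in the bottom Alexander grading, with no differentials from or to them. The detection theorem of Hedden--Watson \cite{MattWatson} then gives that $K^\nu$ is isotopic to $\mathcal{B}_g$.

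The key step is the extremal-grading analysis, done as in the proof of Proposition \ref{flipmap} but with the mapping cones for $-1$ surgery. I would proceed in three steps.

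\textbf{Step 1.} At one extreme Alexander grading, the mapping cone consists of a single generator that is a boundary of some $P$-type element internally. I would argue, using Maslov gradings and the fact that $\psi_{\mathrm{flip}}$ is a grading-preserving quasi-isomorphism, that this $P$-element cannot survive in the adjacent cone. It cannot be a cycle there, because its $h$-image either lies in a different Alexander grading or would be forced onto elements carrying nontrivial vertical differentials. This part holds for any admissible flip map.

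\textbf{Step 2.} At the other extreme, the unique generator $\Theta$ has $v$-image $\Theta'$. For $\Theta$ to be a cycle in the dual complex, $\Theta'$ must be a boundary. This is exactly where Proposition \ref{flipmap2} enters: the relations $\psi_{\mathrm{flip}}(U^k\Theta_k)=\Theta_{k-1}$ give the chain of $h$-maps $h_{k}(\Theta_k)=\Theta'_{k-1}$, suitably indexed, paired with $v$-maps.

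\textbf{Step 3.} With this zigzag in hand, I would cancel the $h$-arrows iteratively as in the figure following the proof of Proposition \ref{flipmap}. This removes all induced long differentials, so the extremal generator ends with zero differential.

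Finally, $-1$ surgery on $K$ followed by $+1$ surgery on its dual $K^\nu$ returns $(Y,K)$. Hence $K$ is the dual knot of $+1$ surgery on $K^\nu\cong\mathcal{B}_g$, which is $\mathcal{\widetilde{B}}^L_g$.

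The main obstacle is Step 2, the bookkeeping of the zigzag. One must check that at each stage the Maslov grading (now $g+k+k^2-1-l$ and $-g+k+k^2+l$ from Theorem \ref{MT2}) singles out exactly the $P(g,k,0)$ generator $\Theta_k$ rather than the $Q$ generators. One must also check that the sign conventions of $-1$ versus $+1$ surgery shift the indices so that $\Theta_{k}$ pairs with $\Theta_{k-1}$ rather than with $\Theta_{-k-1}$. I would first verify this explicitly for $g=1$ and then run the induction on $k$.
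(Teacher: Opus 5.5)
Your skeleton matches the paper's intended argument for Theorem \ref{detect2}: take the dual of $-1$ surgery, verify the Hedden--Watson criterion using the flip constraint, then undo the surgery. Your last step is fine. The gap is in the extremal analysis, which carries all the content. There you have copied the roles of the two extremes from the $\mathcal{\widetilde{B}}^R_g$ argument, but for $\mathcal{\widetilde{B}}^L_g$ under $-1$ surgery the roles are reversed.

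\textbf{Step 1 fails.} Your Step 1 description fits the bottom extreme, dual Alexander grading $-g$. There $\mathrm{MC}(g)$ has the single class $q^{g-1}_{0,1}\in A_g$ (corrected by $\Theta_g\in B_g$), and $q^{g-1}_{0,1}=\partial\Theta_g$ inside $A_g$. Your claim that this step holds for every admissible flip map is false. Take $g=1$. The complex of Theorem \ref{MT2} has $\partial\Theta_1=q^0_{0,1}$, $\partial\Theta_{-1}=Uq^0_{0,1}$, and all other differentials zero. Let $\psi(U\Theta_1)=\Theta_{-1}$ and $\psi(\Theta_0)=\Theta_0$, extended in the evident way; this is the naive $i\leftrightarrow j$ reflection. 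It is a Maslov-grading preserving quasi-isomorphism $C\{j=0\}\to C\{i=0\}$. With it, $h_1(\Theta_1)=\Theta_{-1}=v_0(\Theta_{-1})$. Here $\Theta_{-1}$ has \emph{lower} Alexander grading and no vertical differential, so your ``different Alexander grading'' dichotomy does not save you. Hence $\Theta_1\in A_1$ plus $\Theta_{-1}\in A_0$ is a surviving cycle of $\mathrm{MC}(0)$. Its full differential is exactly the bottom generator $q^0_{0,1}\in A_1+\Theta_1\in B_1$, so the bottom class dies, and $\mathrm{MC}(0)$ has rank $4$ instead of $2$. What rules this out is precisely Proposition \ref{flipmap2}, namely $\psi(U\Theta_1)=\Theta_0$.

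\textbf{Step 2 targets the wrong extreme.} The top class is $q^{-g+1}_{0,1}\in A_{-g+1}$, which is not a $\Theta$. For $g=1$ its $v_0$-image $q^0_{0,1}\in B_0$ is already $\partial\Theta_1$ inside $B_0$, with no input from $\psi$. You should expect this reversal. $\mathcal{\widetilde{B}}^L_g$ is the mirror of $\mathcal{\widetilde{B}}^R_g$, and $-1$ surgery mirrors $+1$ surgery, so the relevant complexes are dual. Dualizing exchanges top with bottom and ``is a cycle'' with ``is not a boundary''.

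\textbf{Where the constraint actually enters.} It must be used at the bottom, to show the bottom class is never hit:
\begin{itemize}
\item $h_g(\Theta_g)=\Theta'_{g-1}$ lies in Alexander grading $g-1$, outside the image of $v_{g-1}$, so $\Theta_g\in A_g$ cancels against $\Theta'_{g-1}$.
\item That cancellation induces an arrow from $\Theta_{g-1}\in A_{g-1}$ into the bottom class.
\item You must follow the zigzag $\Theta_g\to\Theta'_{g-1}\leftarrow\Theta_{g-1}\to\Theta'_{g-2}\leftarrow\cdots$. You must show that the induced arrows into the bottom class run off into the acyclic cones rather than closing up into a surviving cycle.
\end{itemize}
This is the dual of the paper's cancellation argument in the proof of Proposition \ref{flipmap}. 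As written, your plan would try to prove a false flip-independence statement at one end, and would look for the hypothesis at the other end, where (at least for $g=1$) it plays no role.
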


\bibliographystyle{alpha}
\bibliography{references}

\end{document}